\numberwithin{equation}{section}
\newcommand{\todo}[1]{}
\renewcommand{\tilde}{\widetilde}
\renewcommand{\hat}{\widehat}
\newcommand\A{\mathbf A}
\DeclareMathOperator*{\spn}{span}
\DeclareMathOperator{\AB}{AB}
\newcommand{\fd}{\mathfrak D}
\newcommand{\myC}{\mathcal C}
\newcommand{\myH}{\mathbb{H}} 
\newcommand{\HH}{\mathcal{H}}
\newcommand{\hankel}{\mathscr H} 
\newcommand{\coloneq}{\coloneqq }
\newcommand\LL{\mathcal{L}}
\newcommand{\II}{\mathcal{I}_{\ell}^{\perp}}
\newcommand{\IL}{\mathcal{I}_\ell}
\newcommand{\DMB}{\mathcal{DMB}}
\let\Re=\undefined\DeclareMathOperator*{\Re}{Re}
\let\Im=\undefined\DeclareMathOperator*{\Im}{Im}
\newcommand{\R}{\mathbb{R}}
\renewcommand{\S}{\mathbb{S}}
\newcommand{\C}{\mathbb{C}}
\DeclareMathOperator{\myc}{c}
\DeclareMathOperator{\mys}{s}
\newcommand{\Z}{\mathbb{Z}}
\newcommand{\N}{\mathbb{N}}
\newcommand{\dd}{\mathop{}\!\mathrm{d}}
\newtheorem{theorem}{Theorem}[section]
\newtheorem{prop}[theorem]{Proposition}
\newtheorem{lemma}[theorem]{Lemma}
\newtheorem{corollary}[theorem]{Corollary}
\newtheorem{proposition}[theorem]{Proposition}
\theoremstyle{definition}
\newtheorem{definition}[theorem]{Definition}
\newtheorem{remark}[theorem]{Remark}
\newtheorem{assumption}[theorem]{Assumption}
\begin{document}
\title{Intertwining operators beyond the Stark Effect}

\author[L. Fanelli]{Luca Fanelli}
\address{Luca Fanelli \newline
Ikerbasque, Basque Foundation for Science; Departamento de Matem\'aticas, Universidad del Pa\'{\i}s Vasco/Euskal Herriko Unibertsitatea
PV/EHU 48940 Leioa, Spain; BCAM - Basque Center for Applied Mathematics 48009 Bilbao, Spain}
\email{lfanelli@bcamath.org}

\author[X. Su]{Xiaoyan Su}
\address{Xiaoyan Su\newline
  Loughborough University, Loughborough, United Kingdom}
\email{X.Su2@lboro.ac.uk}

\author[Y. Wang]{Ying Wang}
\address{Ying Wang\newline
  Basque Center for Applied Mathematics, Bilbao, Spain}
\email{ywang@bcamath.org}

\author[J. Zhang]{Junyong Zhang}
\address{Junyong Zhang\newline
  Department of Mathematics, Beijing Institute of Technology, Beijing 100081}
\email{zhang\_junyong@bit.edu.cn}

\author[J. Zheng]{Jiqiang Zheng}
\address{Jiqiang Zheng \newline
  Institute of Applied Physics and Computational Mathematics  and National Key Laboratory of Computational Physics\\ Beijing 100088\\ China}
\email{zheng\_jiqiang@iapcm.ac.cn}

\begin{abstract}
  The main mathematical manifestation of the Stark effect in quantum mechanics is the shift and the formation of clusters of eigenvalues when a spherical Hamiltonian is perturbed by lower order terms.
  Understanding this mechanism turned out to be fundamental in the description of the large-time asymptotics of the associated Schr\"odinger groups and can be responsible for the lack of dispersion \cite{FFFP1, FFFP, FFFP2}.
  Recently, Miao, Su, and Zheng introduced in \cite{MSZ} a family of spectrally projected \emph{intertwining operators}, reminiscent of the Kato's wave operators, in the case of constant perturbations on the sphere (inverse-square potential), and also proved their boundedness in $L^p$.
  Our aim is to establish a general framework in which some suitable intertwining operators can be defined also for non constant spherical perturbations in space dimensions 2 and higher.
  In addition, we investigate the mapping properties between $L^p$-spaces of these operators.
  In 2D, we prove a complete result, for the Schr\"odinger Hamiltonian with a (fixed) magnetic potential an electric potential, both scaling critical, allowing us to prove dispersive estimates, uniform resolvent estimates, and $L^p$-bounds of Bochner--Riesz means.
  In higher dimensions, apart from recovering the example of inverse-square potential, we can conjecture a complete result in presence of some symmetries (zonal potentials), and open some interesting spectral problems concerning the asymptotics of eigenfunctions.
  \\[1em]
  \textbf{Key Words:}   intertwining operator, electromagnetic field, Stark effect, uniform resolvent estimate, Bochner--Riesz means.
  \\
  \textbf{AMS Classification:} 35P25, 35A23, 35Q40.
\end{abstract}

\maketitle


\section{Introduction}
In 1913, Johannes Stark observed a peculiar phenomenon now known as the Stark effect: under the influence of a non-constant external electric field, the energy levels of an atom shift and split into multiple lines, clustering into specific formations \cite{Stark}.
This effect is analogous to the mechanism observed under a static external magnetic field, as first identified by Pieter Zeeman in 1896 \cite{Zeeman}.
Mathematical evidence of the above effect can be obtained by analyzing the spectrum of the spherical Schr\"odinger Hamiltonian, $-\Delta_{\mathbb{S}^{d-1}}$ when perturbed by a lower-order external force.
Due to the significant experimental and theoretical implications, extensive literature has been devoted to the study of the spectrum of operators of the form $-\Delta_{\mathbb{S}^{d-1}} + a(\theta)$, where $a:\mathbb{S}^{d-1} \rightarrow \mathbb{R}$.
This type of perturbation typically results in the formation of eigenvalue clusters around the unperturbed energy levels $j(j+d-2)$, with $j \in \mathbb{N}$, together with a uniform shift, provided that the potential is non-constant and satisfies certain symmetry requirements.
Some pioneering works, such as those by D. Gurarie \cite{Gur} and by L. Thomas and S. Wassell \cite{TW}, have established this clustering behavior, with extensive references therein. When $d=2$ (corresponding to a one-dimensional spherical problem), this clustering effect is nearly universal, requiring only minimal regularity conditions on the potentials (see \cite[Lemma 2.1]{FFFP} for detailed asymptotics of eigenvalues and eigenfunctions.)

In the past decade, beginning with \cite{FFFP1}, a deep connection has emerged between these spectral properties and the large-time asymptotics of scaling-invariant Schr\"odinger groups.
Broadly, the structure and separation of high-energy clusters, along with the asymptotic profiles of the associated eigenfunctions, are crucial in analyzing the high-energy band in the time-decay properties of these Schr\"odinger groups in the full space, as showed in several recent papers (see e.g. \cite{FFFP, FFFP2, FGK15, FZZ, FZZ23, GWZZ, GYZZ, MYZ}).
A canonical model within this context is the Schr\"odinger Hamiltonian with an inverse-square potential $-\Delta+\frac{a}{|x|^2}$, $a$ being a constant.
Restricting this model to $\mathbb S^{d-1}$ results in a constant perturbation of the Laplace-Beltrami operator on the sphere, $-\Delta_{\mathbb S^{d-1}}+a$.
The large-time behavior of the propagator $\cramped{e^{it(-\Delta+\frac{a}{|x|^2})}}$, including decay and Strichartz estimates, is now well understood, particularly for dimensions $d=2$ and $3$.
In this case, the spectral problem is straightforward as the spectrum only sees a uniform shift, without cluster formation.
Specifically, the spectrum of $-\Delta_{\mathbb S^{d-1}}+a$ is given by the numbers $j(j+d-2)+a$, with $j\in\mathbb N$, and the eigenfunctions are spherical harmonics, with shifted index.
This leads to a direct one-to-one correspondence between the unperturbed and perturbed eigenvalues, preserving their order and allowing the definition of some operators that map spectral projections before perturbation to their perturbed counterparts.
Miao, Su, and Zheng in \cite{MSZ} introduced such operators, which exhibit an intertwining property that allows to represent the perturbed functional calculus in terms of the unperturbed one.
This concept aligns with the classic wave operators introduced by T. Kato, \cite{K}, whose $L^p$-boundedness properties have been extensively studied, due to their applications in dispersive estimates and scattering theory, in \cite{DF1, EGG, EGL, W, Y1, Y2, Y3, Y4, Y5, Y6} and many other papers.

The objective of this manuscript is to establish a rigorous mathematical foundation for defining similar intertwining operators as those in \cite{MSZ} also in presence of cluster formations.
Extending this analysis beyond the Stark effect needs a precise combinatorial analysis of the energy levels and their associated eigenfunctions after the splitting. Once this structure is understood, we propose a general methodology to prove the $L^p$-boundedness of these operators.
This involves a novel discrete multiplier result with variable coefficients (Lemma \ref{lem:Dismul} below) and detailed decay estimates of oscillatory integrals.

\subsection{Mathematical framework}\label{subsec:framework}

Let  us consider an electromagnetic Schr\"odinger operator on $\R^{d}$, $d \geq 2$, of the form
\begin{equation}\label{equ:lAdef}
  \mathcal{L}_{\A,a}\coloneq\left(-i\nabla+\frac{\A(\theta)}{|x|}\right)^2+\frac{a(\theta)}{|x|^2},
\end{equation}
where $\theta=\frac{x}{|x|}\in\mathbb{S}^{d-1}$, $a\in L^{\infty}(\mathbb{S}^{d-1},\mathbb{R})$ and $\A\in C^1 (\mathbb{S}^{d-1},\R^d)$  is a transversal vector field,
\begin{equation}\label{eq:transversal}
  \A(\theta)\cdot\theta=0\text{ for all }\theta\in\mathbb{S}^{d-1}.
\end{equation}
The differential operator $\mathcal{L}_{\A,a}$ acts on complex-valued functions $f\in C^\infty(\R^d\setminus\{0\})$ as
\begin{align*}
  \mathcal{L}_{\A,a}f =-\Delta f+\frac{|\A(\theta)|^2 +a(\theta)-i \operatorname{div}_{\mathbb S^{d-1}} {\A (\theta)}}{|x|^2} -2i \frac{\A(\theta)}{ |x|} \cdot \nabla f,
\end{align*}
where $\operatorname{div}_{\mathbb S^{d-1}} {\A (\theta)}$ denotes the Riemannian divergence of ${\A }$ on the unit sphere $\mathbb S^{d-1}$ endowed with the standard metric. \todo{speciality of this operator, cause difficulty and some method works}

In spherical coordinates $x=r \theta$, $r=|x|$, $\theta=\frac{x}{|x|}$, $\mathcal{L}_{\A,a}$ can be written as
\begin{equation}\label{LAa-r}
  \begin{split}
    \mathcal{L}_{\A,a}=-\partial_r^2-\frac{d-1}r\partial_r+\frac{L_{\A,a}}{r^2},
  \end{split}
\end{equation}
where the spherical operator  $L_{\A,a}$ is defined by
\begin{align}
  L_{\A,a}=(-i\nabla_{\mathbb{S}^{d-1}}+\A)^{2}+a(\theta),
\end{align}
where $\nabla_{\mathbb{S}^{d-1}}$ is the spherical gradient on the unit sphere $\mathbb{S}^{d-1}$.
By  standard spectral theoretical arguments, we see that the spectrum of $L_{\A,a}$ is purely discrete, and consists of a diverging sequence of real eigenvalues
$\lambda_n(\A,a) \uparrow + \infty $ where each eigenvalue is repeated according to its finite multiplicity (see e.g. \cite{FFT}).

Under the condition that
\begin{align}\label{equ:a-Aassum}
  \lambda_1(\A ,a)\geq-\Big(\frac{d-2}{2}\Big)^2,
\end{align}
the Hamiltonian $\mathcal{L}_{\A ,a}$ can be understood as the Friedrichs' extension of the natural quadratic form on $L^2(\R^d,\C)$ \cite[Theorem VI.2.1]{K}, with the form domain
$$
  \mathcal D(\mathcal L_{\A ,a})
  =\bigg\{
  f\in L^2(\R^d;\C):
  \int_{\R^d}
  \bigg\lvert
  \nabla f+i\frac{\A(\theta)}{|x|}f\bigg\lvert^2+\frac{a(\theta)}{|x|^2}|f|^2
  \dd x
  <\infty
  \bigg\}.
$$
\todo{Definition of the operator}

As explained above, the goal of this manuscript is to introduce some intertwining operators which enable us to reduce the functional calculus associated to $\mathcal L_{\A, a}$ to the one for the electric-free operator $\mathcal L_{\A,0}$. Once such an object is available, its mapping properties between $L^p$-spaces will permit us to obtain relevant information about the large-time asymptotics of the Schr\"odinger group $e^{it\mathcal L_{\A, a}}$, as well as the resolvent of $\mathcal L_{\A, a}$, by inheriting these properties from $\mathcal L_{\A, 0}$.

\todo{special example: AB potential, history and gap}
When $d=2$, a fundamental example of a magnetic field in the above class is the so called \emph{Aharonov--Bohm} potential (hereafter $\AB$), defined by
\begin{equation}\label{ab-potential}
  \tag{Aharonov--Bohm}
  \AB(x):=\frac{\A(\theta)}{|x|},
  \qquad\A(\theta)=\alpha                                                                                           \left(-\frac{x_2}{|x|}, \frac{x_1}{|x|}\right),\quad \alpha\in\R.
\end{equation}
In Aharonov and Bohm's paper \cite{AB}, the phenomenon of scattering in regions in which the electromagnetic field is absent was predicted within the framework of Schr\"odinger dynamics. This phenomenon was later
incontrovertibly confirmed by experiments of Tonomura et al. \cite{PT89}. The $\AB$ model serves as a 2D idealization of a 3D model involving an infinitely long and thin solenoid carrying an electric current, which confines a magnetic field within its interior. Recently, experimental evidence of 2D quasiparticles, known as \emph{anyons}, which naturally carry on an $\AB$-type magnetic field \cite{BKB}, has reinforced the significance of this 2D model. Notably, the $\AB$ Schr\"odinger Hamiltonian $\mathcal L_{\AB,0}$ retains the same scaling invariance as the free operator $\mathcal L_{0,0}$, highlighting the scaling-critical nature of this perturbation.

In the context of zero-order perturbations, the canonical example in any dimension is the inverse-square potential operator $\mathcal L_{0,a}$ where $\cramped{a\geq-\frac{(d-2)^2}{4}}$ is a constant. Also in this case, we have that $\mathcal L_{0,a}$ scales as $\mathcal L_{0,0}$.

Extensive research on such operators has focused on the validity of some families of estimates that characterize the Schr\"odinger equation as the prototypical dispersive PDE. Burq, Planchon, Stalker, and Tahvildar-Zadeh proved in \cite{BPSS, BPST} that the standard Strichartz estimates are valid in the presence of zero-order perturbations with the same scaling as the inverse-square. Later, in \cite{FFFP1, FFFP} it was shown that the decay estimate
\begin{equation}\label{eq:decayshro}
  \|e^{it\LL_{\A,a}}\|_{L^1(\R^2)\to L^\infty(\R^2)}\lesssim |t|^{-1},
\end{equation}
which implies Strichartz estimates by standard results, holds generally in 2D. In 3D, this estimate holds for the defocusing inverse-square potential case $a\geq0$, but fails for the focusing case
$-(d-2)^2/4\leq a<0$ \cite{FFFP1}. The proofs in \cite{FFFP1, FFFP} rely on a detailed analysis of the Stark and Zeeman effects post-perturbation. These results have been later extended to other dispersive operators (see \cite{FZZ, FZZ23, GYZZ, MYZ} and references therein).

This paper aims to address the spectral structure of operators $\mathcal L_{\A, a}$ and $\mathcal L_{\A,0}$ for fixed $\A$, by developing a rigorous framework which may also hold in presence of cluster formations for defining intertwining operators under specific spectral conditions. Establishing such operators and bounding them in $L^p$-spaces will require careful spectral analysis, as well as some novel harmonic analytical tools which will come into play in the sequel.

\subsection{Spectral framework}
The free spherical operator $L_{0, 0}=-\Delta_{\mathbb S^{d-1}}$ has purely discrete spectrum, given by the eigenvalues $\eta_{j}^2$ where $\eta_j\coloneq \sqrt{j(j+d-2)}\ge0$, each one with multiplicity $m_j\coloneq\frac{(d+j-1) !}{(d-1)! j!}=O(j^{d-2})$. Moreover, the usual spherical harmonics are the corresponding eigenfunctions. Having compact inverse, the operators $L_{\A, 0}$ and $L_{\A, a}$ also have purely discrete spectrum with eigenvalues accumulating at infinity (see e.g. \cite{FFFP1}).
The expected spectral picture for $L_{\A,a}$ after the Stark--Zeeman effect suggests to assume from now on that the following conditions hold for $L_{\A, 0}$ and $L_{\A, a}$:
\begin{assumption}\label{eigenvalue assumption}
  \ \par
  \begin{enumerate}
    \item \label{assump1}
          \textbf{Asymptotics of clusters.} There exist $\ell\in\N$ and a constant $0<C_{\A}<1/2$, only depending on $\A$, such that for any $j\geq\ell$, the free eigenvalues $\eta_j^2=j(j+d-2)$  split into clusters of eigenvalues of the operators $L_{\A, 0}$ and $L_{\A, a}$, each with exactly $m_j\coloneq\frac{(d+j-1) !}{(d-1)! j!}$ eigenvalues (with possible repetitions), denoted by $\mu_{jk}^2$ and $\nu_{jk}^2$, $k=1,\dots, m_j$, respectively, with the following localization property:
          \begin{gather*}
            \eta_{j} +C_{\A}-\frac12 < \mu_{j 1} \leq \mu_{j 2} \leq \dots \leq \mu_{j m_j} \leq \eta_{j} +C_{\A}+ \frac12, \\
            \eta_{j} +C_{\A}-\frac12 < \nu_{j 1} \leq \nu_{j 2} \leq \dots \leq \nu_{j m_j} \leq \eta_{j} +C_{\A}+ \frac12.
          \end{gather*}
          The corresponding normalized eigenfunctions are denoted by $e_{jk}$ and $\phi_{jk}$ $( k=1, \dots, m_j )$, respectively, i.e. for all $j \geq \ell,$ and all $1\leq k \leq m_j$,
          \[ L_{\A ,0}e_{jk}(\theta)=\mu_{jk}^2e_{jk}(\theta),
            \quad\text{and}\quad
            L_{\A ,a}\phi_{jk}(\theta)=\nu_{jk}^2\phi_{jk}(\theta).   \]
    \item \label{assump2} \textbf{Simultaneous completion.} There exists $\ell_0$ (possibly zero) depending on $\ell$, a set \(
          \{e_i\}_{i=1,2, \dots, \ell_0}
          \) of normalized eigenfunctions
          of $L_{\A, 0}$, and a set \(
          \{\phi_i\}_{i=1,2, \dots, \ell_0}
          \) of normalized eigenfunctions
          of $L_{\A, a}$,
          with corresponding eigenvalues $\mu_i^2, \nu_i^2 \in [-\frac{(d-2)^2}{4}, \eta_\ell+C_{\A}-\frac{1}{2}]$ respectively, such that
          $\{e_i\}_{i=1,2, \dots, \ell_0}\cup \{e_{jk}\}_{j\geq \ell, k}$ and $\{\phi_i\}_{i=1,2, \dots, \ell_0}\cup \{\phi_{jk}\}_{j\geq \ell, k}$  are both bases of $L^2(\mathbb S^{d-1})$.
  \end{enumerate}
\end{assumption}
In the above, we allow $\mu_i^2,\nu_i^2<0$ for $i<\ell_0$ for notational convenience.
\begin{remark}\label{rem:explain1}
  Assumption \ref{eigenvalue assumption} \eqref{assump1} is describing the typical asymptotic spectral picture after the occurrence of the Stark--Zeeman effect. First we notice a shift of the $\eta_j^2$'s. This is due to the presence of $\A$ and $a$, but asymptotically the dominant order of the shift is given by $\A$ (see e.g. \cite[Lemma 2.1]{FFFP}), and this is the reason why $C_\A$ does not depend on $a$. Then the free eigenvalues split in a cluster of new eigenvalues, and the total multiplicity remains equal to $m_j$. This last fact is generic in dimension 2, and it's known to occur under some suitable symmetry assumptions in higher dimensions.
  Among the examples of potentials satisfying \eqref{assump1}, we list:
  \begin{itemize}
    \item
          for any $d\geq2$, the inverse-square potential, namely $L_{\A,a}$, with $a$ constant;
    \item
          in $d=2$, provided $\A \in W^{1, \infty}(\mathbb{S}^{1},\mathbb{R}^2)$ and $a\in W^{1, \infty}(\mathbb{S}^{1},\mathbb{R})$, see Section \ref{sec: dimension two} for details;
    \item
          in $d\geq 3$, provided $\A=0$ and $a$ enjoys \emph{zonal} symmetry \cite{Gur, TW}.
  \end{itemize}
\end{remark}
\begin{remark}\label{rem:explain2}
  Assumption \ref{eigenvalue assumption} \eqref{assump2} is about the low frequency part of the spectra: under this assumption, we can always complete the above families of eigenfunctions to orthonormal bases of $L^2(\mathbb S^{d-1})$, by adding \emph{exactly} the same number of low frequencies $j\le\ell_0$ in the two cases $L_{\A, 0}$ and $L_{\A, a}$. This is crucial in order to be able to give the definition of intertwining operators in Section \ref{equ:Wjthetadef} below. Although \eqref{assump2} could seem to be easily satisfied, we did not find in the literature any result along these lines. Hence we needed to perform a suitable analysis in order to give examples of potentials which fully satisfy Assumption \ref{eigenvalue assumption}, see Section \ref{sec: dimension two}.
\end{remark}
From now on, we denote the index sets by
\begin{gather*}
  \IL \coloneq \{ (j,k): j\ge \ell, \ 1\le k \le m_j\}, \quad
  \II \coloneq \{ 1,\dots,\ell_0\} ,
\end{gather*}
so that $\{ e_\alpha \}_{\alpha \in \II \cup \IL}$ and $\{ \phi_\alpha \}_{\alpha \in {\II} \cup \IL}$  are two bases of $L^2(\S^{d-1})$; we will also write $\sum_\alpha$ instead of $\sum_{\alpha \in \II\cup\IL}$ and $\sum_{j,k}$ to mean $\sum_{j=\ell}^\infty\sum_{k=1}^{m_j}$ when the intent is clear.

We now aim to introduce the spectral measures associated to $\mathcal L_{\A,0}$ and $\mathcal L_{\A,a}$. We first notice that under  Assumption \ref{eigenvalue assumption},
we have the following two orthogonal decompositions of $L^2(\mathbb S^{d-1})$:
\begin{gather*}
  L^2(\S^{d-1})
  = \bigoplus_{\mathclap{\alpha \in \II \cup \IL }}   \spn \{ e_\alpha \}
  = \bigoplus_{\mathclap{\alpha \in \II \cup \IL }}   \spn \{ \phi_\alpha \}.
  \shortintertext{Setting}
  \HH_\alpha
  = \{
  f(r) e_\alpha (\theta) \mid f \in L^2(\R_+ ; r^{d-1} \dd r)
  \} , \quad \alpha \in \II \cup \IL , \\
  \tilde{\mathcal  {H}}_\alpha
  = \{
  f(r) \phi_\alpha (\theta) \mid f \in L^2(\R_+ ; r^{d-1} \dd r)
  \},  \quad \alpha \in \II \cup \IL ,
\end{gather*}
we get that
$L^2(\R^d)
  = \bigoplus_{\alpha \in \II \cup \IL }   \HH_\alpha
  = \bigoplus_{\alpha \in \II \cup \IL }   \tilde{\HH}_\alpha
$.
In other words, any function $f\in L^2(\mathbb R^d)$ can be written as
\[
  f(x)
  = \sum_{\mathclap{\alpha\in\II\cup\IL}} f_\alpha (r) e_\alpha(\theta)
  = \sum_{\mathclap{\alpha\in\II\cup\IL}} \tilde f_\alpha (r) \phi_\alpha(\theta)
\]
where
$f_\alpha(r) =
  \int_{\S^{d-1}} f(r,\theta) \bar{e}_\alpha(\theta) \dd \theta$,
and
$\tilde f_\alpha(r) =
  \int_{\S^{d-1}} f(r,\theta) \bar{\phi}_\alpha(\theta) \dd \theta$.

\noindent
We now define the following projection operators to each subspace:
\begin{equation*}
  \begin{split}
    \myH_{\alpha}        & \coloneq\LL_{\A ,0}|_{{\HH}_{\alpha}}=-\partial_r^2-\frac{d-1}r\partial_r+\frac{\mu_{\alpha}^2}{r^2},
    \quad \alpha \in \II \cup \IL ,                                                                                                                                      \\
    \tilde \myH_{\alpha} & \coloneq\LL_{\A ,a}|_{\tilde{\HH}_{\alpha}}=-\partial_r^2-\frac{d-1}r\partial_r+\frac{\nu_{\alpha}^2}{r^2}, \quad  \alpha \in \II \cup \IL ,.
  \end{split}
\end{equation*}
It is well-known that
$\mathbb H_{\alpha}$ and $\tilde{\mathbb H}_{\alpha}$ formally have eigenfunctions
\begin{align*}
  E_{\alpha}(r, \lambda)=(r\lambda)^{-\frac{d-2}{2}} J_{\tilde{\mu}_{\alpha}}(\lambda  r);
  \quad \tilde E_{\alpha}(r, \lambda)=( r\lambda)^{-\frac{d-2}{2}} J_{\tilde{\nu}_{\alpha}}(\lambda  r),
\end{align*}
where the $J_{\nu}$ are Bessel functions of the first kind  of order $\nu$, and \[\tilde{\mu}_{\alpha}=\sqrt{\mu_{\alpha}^{2}+\Big(\frac{d-2}{2}\Big)^{2}}, \quad \tilde{\nu}_{\alpha}=\sqrt{\nu_{\alpha}^{2}+\Big(\frac{d-2}{2}\Big)^{2}}.\] Their corresponding eigenvalue is $\lambda^2$, i.e.
\begin{align}\label{equ:lapeigen}
  \myH_{\alpha} E_{\alpha}(r, \lambda)=\lambda^2 E_{\alpha}(r, \lambda), \quad \tilde\myH_{\alpha}\tilde E_{\alpha}(r, \lambda)=\lambda^2 \tilde E_{\alpha}(r, \lambda).
\end{align}
For a radial function $f(r) \in L^2(\R^+,r^{d-1}\dd r)$, we then define its Hankel transform of order $\nu \geq 0$ as
\begin{align}\label{equ:Hanktranf}
  \hankel_{\nu}f(\lambda)=\int_0^\infty (r\lambda)^{-\frac{d-2}{2}} J_{\nu}(r\lambda) f(r) r^{d-1}\dd r.
\end{align}
It is easy to see that
$$
  \hankel_{\nu}\hankel_{\nu}=I,
  \qquad
  \|f\|_{L^2(\R^+,r^{d-1}\dd r)}=\|\hankel_{\nu}f\|_{L^2(\R^+, \lambda^{d-1}\dd \lambda)}.
$$
\noindent
Furthermore, Hankel transforms diagonalize the differential operators $\myH_{\alpha}$ and $\tilde\myH_{\alpha}$:
\begin{align}
  \hankel_{\tilde{\mu}_{\alpha}}(\myH_{\alpha} f)(\lambda) =\lambda^2  \hankel_{\tilde{\mu}_{\alpha}} f(\lambda), \quad     \hankel_{\tilde{\nu}_{\alpha}}(\tilde \myH_{\alpha} f)(\lambda) =\lambda^2  \hankel_{\tilde{\nu}_{\alpha}} f(\lambda).
\end{align}
Therefore, the spectral measures associated to $\mathcal{L}_{\A,a}$ and $\mathcal{L}_{\A,0}$ are given by
\begin{align*}
  \dd E_{\lambda}(\A, 0) f(r, \theta) & = \sum_{\mathclap{\alpha\in\II\cup\IL}}  \hankel_{\tilde{\mu}_{\alpha}}  f_{\alpha}(\lambda) e_{\alpha}(\theta),\quad \text{and}
  \\
  \dd E_{\lambda}(\A, a) f(r, \theta) & = \sum_{\mathclap{\alpha\in\II\cup\IL}}
  \hankel_{\tilde{\nu}_{\alpha}} \tilde f_{\alpha}(\lambda) \phi_{\alpha}(\theta).
\end{align*}

\subsection{Intertwining Operators}\label{sec:inter}

We are now ready to introduce the core of this manuscript.
We start by defining the following families of linear angular operators:
\begin{align}\nonumber
  W^\theta_{\alpha}:\spn\{e_{\alpha}(\theta)\} & \to  \spn\{\phi_{\alpha}(\theta)\} \\\label{equ:Wjthetadef}
  W_{\alpha}^\theta(e_{\alpha}(\theta)) & =
  \phi_{\alpha}(\theta).
\end{align}
The dual operators $\{W_{\alpha}^{\theta,*}\}_{\alpha\in\II\cup  \IL}$ are given by
\begin{align}\nonumber
  W^{\theta,*}_{\alpha}:\spn\{\phi_{\alpha}(\theta)\} & \to \spn\{e_{\alpha}(\theta)\} \\\label{equ:Wjthetastardef}
  W_{\alpha}^{\theta, *}(\phi_{\alpha}(\theta)) & =
  e_{\alpha}(\theta).
\end{align}
\begin{remark}\label{rem:fund}
  Notice that the above operators in are not uniquely defined: indeed, definition \eqref{equ:Wjthetadef} depends on the way we order the eigenfunctions $e_\alpha,\varphi_\alpha$, when the corresponding eigenvalues are repeated (see Assumption \ref{eigenvalue assumption} above). Nevertheless, we are able to prove that any of these choices of the order leads to the definition of intertwining operators with suitable boundedness properties in $L^p$.
\end{remark}
For each fixed $\alpha  \in \II\cup  \IL$, $W_{\alpha}^\theta$ and $W_{\alpha}^{\theta,*}$ are unitary operators on $L^2(\mathbb S^{d-1})$ since both $e_{\alpha}(\theta)$ and $\phi_{\alpha}(\theta)$ are normalized.
\vskip0.2cm
\noindent
Next, we define a family of radial operators $\{W_{\alpha}^r\}_{\alpha \in \II \cup \IL}$ on $L^2(r^{d-1}\dd r)$ as follows:
\begin{equation}\label{equ:wjrdef}
  W_{\alpha}^r\coloneq\hankel_{{\tilde{\nu}_{\alpha}}}\hankel_{\tilde{\mu}_{\alpha}}.
\end{equation}
As for the dual operators $W^{r,*}_\alpha$, those are given by
\begin{equation}\label{equ:wjrastdef}
  W_{\alpha}^{r,*}=\hankel_{\tilde{\mu}_{\alpha}}\hankel_{\tilde{\nu}_{\alpha}}.
\end{equation}
Thanks to the properties of the Hankel transform, we see that for each fixed $j$, $W_{\alpha}^r$ and $W_{\alpha}^{r, *}$ are unitary operators on $L^2(r^{d-1} \dd r)$.
\vskip0.2cm
\noindent
We can finally give the main definition of this manuscript.
\begin{definition}[Intertwining operators]
  Given $f \in L^2(\mathbb R^d)$, with expansions
  \begin{align*}
    f(x)
    =  \sum_{\mathclap{\alpha\in\II\cup\IL}} f_\alpha (r) e_\alpha(\theta)
    = \ \sum_{\mathclap{\alpha\in\II\cup\IL}} \tilde f_\alpha (r) \phi_\alpha(\theta),
  \end{align*}
  we define the intertwining operators $W$, $W^*$ on $ L^2(\mathbb R^d)$ by
  \begin{align}\label{equ:Wdef}
    Wf(x) & \coloneq  \sum_{\mathclap{\alpha\in\II\cup\IL}} W^r_\alpha f_\alpha(r) W^\theta_\alpha(e_\alpha(\theta))
    =  \sum_{\mathclap{\alpha\in\II\cup\IL}} \hankel_{{\tilde{\nu}_{\alpha}}}\hankel_{\tilde{\mu}_{\alpha}}f_\alpha (r) \phi_\alpha(\theta)\,
    \shortintertext{with its dual operator $W^*$ formally given by}
    \label{equ:Wastdef}
    W^*f(x) & \coloneq \sum_{\mathclap{\alpha\in\II\cup\IL}} W^{r,*}_\alpha \tilde f_\alpha(r) W^{\theta,*}_\alpha(\phi_\alpha(\theta))= \sum_{\mathclap{\alpha\in\II\cup\IL}} \hankel_{{\tilde{\mu}_{\alpha}}}\hankel_{\tilde{\nu}_{\alpha}}f_\alpha (r) e_\alpha(\theta).
  \end{align}
\end{definition}
In order to check the duality, notice that,
for any  $f, g \in L^2(\mathbb R^d)$,
one has
\begin{align*}
  \langle Wf, g\rangle & =\sum_{\alpha}\langle\hankel_{\tilde{\nu}_\alpha}\hankel_{\tilde{\mu}_\alpha} f_\alpha, \tilde g_\alpha \rangle_{\tilde \HH_\alpha}=\sum_{\alpha}\langle f_\alpha, \hankel_{\tilde{\mu}_\alpha}\hankel_{\tilde{\nu}_\alpha}\tilde g_\alpha \rangle_{\HH_\alpha}= \langle f, W^*g\rangle.
\end{align*}
As the Hankel transform is unitary, and both $\{e_\alpha\}_{\alpha\in \II \cup \IL}$ and  $\{\phi_\alpha\}_{\alpha\in \II \cup \IL}$ are bases, it is easy to see that $W$ is also unitary, i.e.
\begin{align}\label{inversion operator}
  W W^*= W^*W =I.
\end{align}
We now show that the above operator enjoy the above mentioned intertwining property.
\begin{lemma}[Intertwining property]\label{lem:l2bound}
  The operator $W$ defined in \eqref{equ:Wdef} satisfies the following intertwining property in $L^2(\mathbb R^d)$: for any bounded Borel-measurable function $F$,
  \begin{equation}\label{equ:interwinF12}
    F(\mathcal{L}_{\A,a})=WF(\LL_{\A,0})W^*.
  \end{equation}
\end{lemma}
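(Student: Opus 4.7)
The plan is to reduce the identity to an explicit bookkeeping computation in the two orthonormal bases $\{e_\alpha\}_{\alpha\in\II\cup\IL}$ and $\{\phi_\alpha\}_{\alpha\in\II\cup\IL}$. The essential observation is that, by Assumption~\ref{eigenvalue assumption} together with the spherical decomposition \eqref{LAa-r}, each subspace $\HH_\alpha$ is invariant under $\mathcal{L}_{\A,0}$ with $\mathcal{L}_{\A,0}|_{\HH_\alpha}=\myH_\alpha$, and analogously $\mathcal{L}_{\A,a}|_{\tilde\HH_\alpha}=\tilde\myH_\alpha$. Consequently the functional calculi of $\mathcal{L}_{\A,0}$ and $\mathcal{L}_{\A,a}$ split as direct sums indexed by $\alpha$, and the verification of \eqref{equ:interwinF12} reduces to checking an identity block by block.

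On each radial block, the Hankel diagonalization of the Bessel-type operators $\myH_\alpha,\tilde\myH_\alpha$ gives the explicit formulas $F(\myH_\alpha)=\hankel_{\tilde\mu_\alpha}M_F\hankel_{\tilde\mu_\alpha}$ and $F(\tilde\myH_\alpha)=\hankel_{\tilde\nu_\alpha}M_F\hankel_{\tilde\nu_\alpha}$, where $M_F$ denotes the multiplication operator by $F(\lambda^2)$ on $L^2(\R_+,\lambda^{d-1}\dd\lambda)$. Hence for $f\in L^2(\R^d)$ with $\phi$-expansion $f=\sum_\alpha\tilde f_\alpha(r)\phi_\alpha(\theta)$,
\[
F(\mathcal{L}_{\A,a})f=\sum_{\alpha\in\II\cup\IL}\hankel_{\tilde\nu_\alpha}M_F\hankel_{\tilde\nu_\alpha}\tilde f_\alpha(r)\,\phi_\alpha(\theta).
\]

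Next I would compute $W\,F(\mathcal{L}_{\A,0})\,W^*f$ directly from the definitions \eqref{equ:Wdef}--\eqref{equ:Wastdef}. Applying $W^*$ produces $\sum_\alpha\hankel_{\tilde\mu_\alpha}\hankel_{\tilde\nu_\alpha}\tilde f_\alpha(r)\,e_\alpha(\theta)$; applying $F(\mathcal{L}_{\A,0})$ then inserts the sandwich $\hankel_{\tilde\mu_\alpha}M_F\hankel_{\tilde\mu_\alpha}$ on the radial coefficient of each $e_\alpha$; and finally $W$ inserts $\hankel_{\tilde\nu_\alpha}\hankel_{\tilde\mu_\alpha}$ and swaps $e_\alpha\mapsto\phi_\alpha$. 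Four of the six resulting Hankel factors collapse pairwise via the involution $\hankel_\nu\hankel_\nu=I$, leaving precisely the expression above.

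There is no genuine obstacle here: the statement is essentially formal, once one recognizes that $\LL_{\A,0}$ and $\LL_{\A,a}$ are simultaneously block-diagonal with the \emph{same} index set $\II\cup\IL$ (this is precisely why Assumption~\ref{eigenvalue assumption}\eqref{assump2} is imposed), that the Hankel transform is self-inverse on each block, and that $W,W^*$ are designed so as to intertwine the $\alpha$-blocks and absorb the change of Bessel order. The only mild subtlety is justifying the term-by-term manipulation of the $\alpha$-sum, which is immediate since $F$ is bounded and every operator in the chain restricts to a bounded operator on each block, so all series converge unconditionally in $L^2(\R^d)$ and the block-by-block identity extends to the full sum.
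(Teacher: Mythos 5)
Your proposal is correct and follows essentially the same route as the paper: block-diagonalize both Hamiltonians over the common index set $\II\cup\IL$, use the Hankel diagonalization $F(\myH_\alpha)=\hankel_{\tilde\mu_\alpha}M_F\hankel_{\tilde\mu_\alpha}$ (and its analogue for $\tilde\myH_\alpha$), and let the self-inverse Hankel pairs collapse; the paper merely arranges the same computation as the one-sided identity $F(\mathcal{L}_{\A,a})W=WF(\mathcal{L}_{\A,0})$ plus $WW^*=I$ rather than expanding the triple product directly.
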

\begin{proof}
  By \eqref{inversion operator},  it is sufficient to show that  $F(\mathcal{L}_{\A,a}) W=WF(\LL_{\A,0})$. By the definition, we have for the left-hand side
  \begin{align*}
    F(\mathcal{L}_{\A,a})W f(r,\theta)
     & =F\big(\mathcal{L}_{\A,a}\big) \Big(\sum_\alpha\hankel_{{\tilde{\nu}_{\alpha}}}\hankel_{\tilde{\mu}_{\alpha}} f_{\alpha}(r ) \phi_{\alpha}(\theta)\Big)                                      \\
     & =\sum_\alpha F(\tilde \myH_{\alpha}) \hankel_{{\tilde{\nu}_{\alpha}}}\hankel_{\tilde{\mu}_{\alpha}} f_{\alpha}(r ) \phi_{\alpha}(\theta)                                                     \\
     & =\sum_\alpha \hankel_{\tilde{\nu}_{\alpha}}\hankel_{\tilde{\nu}_{\alpha}}F(\myH_{\alpha} )\hankel_{\tilde{\nu}_{\alpha}}\hankel_{\tilde{\mu}_{\alpha}} f_{\alpha}(r ) \phi_{\alpha}(\theta)) \\
     & =\sum_\alpha \hankel_{{\tilde{\nu}_{\alpha}}}\left[ F(\lambda^2) (\hankel_{\tilde{\mu}_{\alpha}}f_{\alpha})(\lambda)\right](r ) \phi_{\alpha}(\theta),
  \end{align*}
  while the right-hand side is
  \begin{align*}
    WF(\LL_{\A,0})f(r,\theta) & =W \bigg(\sum_\alpha  F(\myH_{\alpha}) f_{\alpha}(r )e_{\alpha}(\theta)\bigg)                                                                                    \\
                              & =\sum_\alpha  \hankel_{\tilde{\nu}_{\alpha}}\hankel_{\tilde{\mu}_{\alpha}}\left[ F(\myH_{\alpha}) f_{\alpha }(\cdot)\right](r )\phi_{\alpha}(\theta)             \\
                              & =\sum_\alpha \hankel_{{\tilde{\nu}_{\alpha}}}\left[ F(\lambda^2) \left(\hankel_{\tilde{\mu}_{\alpha}}f_{\alpha}\right)(\lambda)\right](r )\phi_{\alpha}(\theta).
  \end{align*}
  The proof is hence complete.
\end{proof}

\section{Main results}
We showed in the previous section that the intertwining operators defined in \eqref{equ:Wdef} and \eqref{equ:Wastdef} are bounded in $L^2(\mathbb R^d)$.
However, to get the $L^p$-boundedness of those operators for $p\neq 2$, we need stronger spectral conditions than those in Assumption \ref{eigenvalue assumption} on the spherical operators $L_{\A,0}$ and $L_{\A,a}$. Roughly speaking, we need to control the asymptotic size of the separation between the eigenvalues $\mu_{jk}^2, \nu_{jk}^2$. Notice that $\A$ is fixed in the two cases; in fact, we believe that the intertwining operators between $\LL_{\A_1,0}$ and $\LL_{\A_2,0}$ are not bounded in $L^p$ for any nontrivial $\A_1,\A_2$, as  their eigenvalues $\mu_{jk}^2, \nu_{jk}^2$ (appropriately paired up) will not be close enough to each other as $j$ goes to infinity (see the details in the sequel).

\subsection{Discrete mulitiplier bases}
In order to state our main results, we start by some preliminary notations and definitions.
For any doubly indexed sequence $\{\myC_{\alpha}\}_{\alpha \in \IL}$, we define its average sequence $\{\overline{\myC}_j\}_{j\geq \ell}$ by
\begin{align}
    \overline{\myC}_j=\frac{1}{m_j} \sum_{k=1}^{m_j} \myC_{jk}
\end{align}
Then we define the forward finite difference operator with respect to $j$ acting on sequences $\{\overline{\myC}_{j}\}_{j \geq \ell}$ by
\[ \fd  \overline{\myC}_{j}\coloneq  \overline{\myC}_{j+1} - \overline{\myC}_{j}, \quad \fd^N  \overline{\myC}_{j}\coloneq\fd^{N-1}  (\fd \overline{\myC}_{j}) . \]

\begin{definition}\label{defn:dmb} We call a basis   $\{\psi_{\alpha}\}_{\alpha \in \II \cup \IL}$ on $L^2(\mathbb S^{d-1})$ a \emph{discrete multiplier basis} and write $\{\psi_{\alpha}\}_{\alpha \in \II \cup \IL} \in \DMB$ if whenever $\{\myC_{\alpha}\}_{\alpha\in \IL}$ is  a sequence of complex numbers
  satisfying
  \begin{align}
    \sup_{j\geq \ell} \left|\overline{\myC}_{j}\right| & \lesssim 1,  \label{uniform bound}                    \\
    \sup_{L\in\N \colon 2^L \ge \ell } 2^{(N-1)L} \sum_{j=2^L}^{2 ^{L+1}}\left| \fd ^N \overline{\myC}_{j}\right|
     & \lesssim 1,\quad N=1,2, \dots, \left\lfloor\frac{d-1}{2}\right\rfloor \label{uniform difference bound},
  \end{align}
  we have for all $1<p<\infty$ the following inequality,
  \begin{align}\label{equ:constdist}
    \bigg\|\sum_{j=\ell}^\infty  \sum_{k=1}^{m_j}\myC_{jk} F_{jk} \psi_{jk}(\theta)\bigg\|_{L^{p}(\mathbb S^{d-1})} \lesssim \bigg\|\sum_{j=\ell}^\infty   \sum_{k=1}^{m_j} F_{jk} \psi_{jk}(\theta)\bigg\|_{L^{p}(\mathbb S^{d-1})}.
  \end{align}
\end{definition}
\begin{remark} Let $\{\psi_\alpha\}_\alpha$ be the eigenfunctions of $L_{\A,a}$. When ${\A }=0$ and $a(\theta)=0$,  $\{\psi_{\alpha}\}$ are the spherical harmonics  on $\mathbb S^{d-1}$, and $\{\psi_{\alpha}\}_{\alpha}\in \DMB$ by the results in \cite{BC}. More generally, for any $\A  \in C^{\infty}(\mathbb S^{d-1}; \mathbb R^{d})$ and $a \in C^{\infty}(\mathbb S^{d-1}; \mathbb R)$,  $\{\psi_{\alpha}\}_\alpha\in \DMB$ by  \cite[Theorem 5.3.1]{Sogge}.
\end{remark}
We now state a new fundamental tool for our main results.
\begin{lemma}[Variable coefficients discrete multiplier theorem]\label{lem:Dismul}
  Let $\{\myC_{\alpha}(\theta)\}_{\alpha \in \IL}$ be a sequence of bounded functions belonging to  $C^{d-1}(\mathbb S^{d-1})$ and satisfying the following conditions:
  \begin{gather*}
    \sup_{j, \theta}|\overline{\myC}_{j}(\theta)| <C, \quad \sup_{j, \theta}\left|\partial_\theta^{d-1}\overline{\myC}_{j}(\theta)\right| <C,\\
    \sup_{\theta, 2^L \geq \ell} 2^{(N-1)L} \sum_{j=2^L }^{2 ^{L+1}}\left| \fd ^N \overline{ \myC}_{j}(\theta)\right| < C,                        \\
    \sup_{\theta, 2^L \geq \ell} 2^{(N-1)L} \sum_{j=2^L}^{2 ^{L+1}}\left| \fd ^N \partial_\theta^{d-1} \overline{\myC}_{j}(\theta)\right|  <C,
  \end{gather*}
  for $N=1,2, \dots, \left\lfloor\frac{d-1}{2} \right\rfloor$.
  If $\{\psi_{\alpha}\} \in \DMB$, then for every $1<p<\infty$,
  \begin{align*}
    \bigg\|\sum_{j = \ell}^\infty \sum_{k=1}^{m_j} \myC_{jk}(\theta)  F_{jk} \psi_{jk}(\theta)\bigg\|_{L^{p}(\mathbb S^{d-1})} \lesssim \bigg\|\sum_{j = \ell}^\infty   \sum_{k=1}^{m_j} F_{jk} \psi_{jk}(\theta)\bigg\|_{L^{p}(\mathbb S^{d-1})}.
  \end{align*}
\end{lemma}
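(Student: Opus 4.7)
The plan is to reduce the variable-coefficient inequality to the constant-coefficient DMB statement in Definition \ref{defn:dmb} by expanding the symbol $\myC_{jk}(\theta)$ in a smooth orthonormal basis on $\mathbb S^{d-1}$ in the $\theta$ variable, applying the DMB bound mode-by-mode, and then summing. Concretely, pick an orthonormal basis $\{Y_n\}_{n\ge0}$ of $L^2(\mathbb S^{d-1})$ consisting of eigenfunctions of $-\Delta_{\mathbb S^{d-1}}$ (say the usual spherical harmonics) and write
\[
  \myC_{jk}(\theta) = \sum_n c_{jk,n}\, Y_n(\theta), \qquad c_{jk,n}=\langle \myC_{jk}, Y_n\rangle_{L^2(\mathbb S^{d-1})}.
\]
Substituting and denoting $A_n f(\theta):=\sum_{jk} c_{jk,n} F_{jk} \psi_{jk}(\theta)$, the left-hand side of the desired inequality becomes $\sum_n Y_n(\theta)\,A_n f(\theta)$, which is the sum of operators whose coefficients depend only on $(j,k)$.

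The heart of the proof is to check that, for each $n$, the scalar sequence $\{c_{jk,n}\}_{jk}$ fits into Definition \ref{defn:dmb} with a constant that decays in $n$ fast enough to be summed. The pointwise-in-$\theta$ hypotheses
\(
  \sup_{j,\theta}|\overline{\myC}_j(\theta)|\le C
\) and
\(
  \sup_{\theta,2^L\ge\ell}2^{(N-1)L}\sum_{j=2^L}^{2^{L+1}}|\fd^N\overline{\myC}_j(\theta)|\le C
\)
give, after integrating the average $\overline{\myC}_j(\cdot)$ against $Y_n$, the corresponding versions of \eqref{uniform bound} and \eqref{uniform difference bound} for $\overline{c}_{j,n}$. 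The decisive gain comes from the $d-1$ derivatives in $\theta$: integrating by parts $d-1$ times against the eigenfunction $Y_n$ (which contributes a factor $\deg(Y_n)^{d-1}$ from $-\Delta_{\mathbb S^{d-1}}$) together with the assumed boundedness of $\partial_\theta^{d-1}\overline{\myC}_j(\theta)$ and of $\fd^N \partial_\theta^{d-1}\overline{\myC}_j(\theta)$, produces the decay
\[
  |\overline{c}_{j,n}|+\sup_{2^L\ge\ell}2^{(N-1)L}\sum_{j=2^L}^{2^{L+1}}|\fd^N\overline{c}_{j,n}| \lesssim (1+\deg Y_n)^{-(d-1)}.
\]
Applying Definition \ref{defn:dmb} then yields, for every $1<p<\infty$,
\[
  \|A_n f\|_{L^p(\mathbb S^{d-1})} \lesssim (1+\deg Y_n)^{-(d-1)} \bigg\|\sum_{j=\ell}^\infty\sum_{k=1}^{m_j} F_{jk}\psi_{jk}\bigg\|_{L^p(\mathbb S^{d-1})}.
\]

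Finally, the estimate on $\|Tf\|_{L^p}=\|\sum_n Y_n\,A_n f\|_{L^p}$ follows by summing over $n$, grouping spherical harmonics by degree $k$ (multiplicity $\lesssim k^{d-2}$) and bounding $\|Y_n\|_{L^\infty}\lesssim k^{(d-2)/2}$. The main obstacle is the quantitative bookkeeping in this last step: the naive scalar sum $\sum_n \|Y_n\|_{L^\infty}(1+\deg Y_n)^{-(d-1)}$ is borderline, so one must replace the crude Hölder step by a Littlewood–Paley square-function argument on $\mathbb S^{d-1}$ (applied to the partial sums $\sum_{\deg Y_n\sim 2^M} Y_n A_n f$), or equivalently perform a dyadic frequency decomposition of $\myC_{jk}(\theta)$ and exploit Bernstein's inequality on each block. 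The precise shape of the hypothesis—$d-1$ continuous derivatives in $\theta$ together with $\lfloor(d-1)/2\rfloor$ discrete differences in $j$—is calibrated so that both the dyadic sum in frequency on $\mathbb S^{d-1}$ and the dyadic sum in $j$ required by $\DMB$ converge simultaneously, which is where I expect the bulk of the technical work to lie.
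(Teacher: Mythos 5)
Your strategy (expanding the symbol $\myC_{jk}(\theta)$ in spherical harmonics and applying the constant-coefficient $\DMB$ property mode by mode) is genuinely different from the paper's argument, but it has a real gap exactly at the step you defer. With only $d-1$ derivatives of $\overline{\myC}_j$ available, integration by parts against $Y_n$ cannot give more than $(1+\deg Y_n)^{-(d-1)}$ decay of the coefficients (and for even $d$ the Laplacian-based argument as you state it gives only $\lfloor\tfrac{d-1}{2}\rfloor$ applications of $-\Delta_{\mathbb S^{d-1}}$, i.e.\ $(\deg Y_n)^{-(d-2)}$). The triangle-inequality sum then reads, after grouping by degree $k$ with multiplicity $\sim k^{d-2}$ and $\|Y_n\|_{L^\infty}\lesssim k^{(d-2)/2}$,
\begin{equation}
  \sum_k k^{d-2}\cdot k^{\frac{d-2}{2}}\cdot k^{-(d-1)}=\sum_k k^{\frac{d-4}{2}},
\end{equation}
which diverges for every $d\ge 2$ (logarithmically at $d=2$, polynomially for $d\ge 5$). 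This is not a borderline bookkeeping issue that more care repairs: the hypotheses simply do not supply enough decay in $n$ to absorb the growth of $\|Y_n\|_{L^\infty}$ and of the multiplicities. The Littlewood--Paley fix you gesture at is moreover circular as stated: the dyadic block $\sum_{\deg Y_n\sim 2^M}Y_nA_nf=\sum_{j,k}(P_M\myC_{jk})(\theta)F_{jk}\psi_{jk}(\theta)$ is again a variable-coefficient discrete multiplier of exactly the type the lemma is trying to bound, and the $\psi_{jk}$ are an arbitrary $\DMB$ basis with no orthogonality relation to the $Y_n$, so no square-function gain is available without further input.

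The paper avoids all of this with a much more elementary device: it applies the Fundamental Theorem of Calculus iteratively in each angular coordinate, writing $\myC_{jk}(\theta)$ as a finite sum of iterated integrals $\int_0^{\theta_{i_1}}\cdots\int_0^{\theta_{i_N}}G(\theta'_{i_1},\dots,\theta'_{i_N})\,\dd\theta'_{i_1}\cdots\dd\theta'_{i_N}$ in which the dependence on $\theta$ sits only in the integration limits. For each fixed value of the integration parameters the integrand is a constant-coefficient sequence in $(j,k)$ satisfying \eqref{uniform bound} and \eqref{uniform difference bound} uniformly (this is exactly what the hypotheses on $\partial_\theta^{d-1}\overline{\myC}_j$ and $\fd^N\partial_\theta^{d-1}\overline{\myC}_j$ are for), so Definition \ref{defn:dmb} applies slice by slice and one concludes by Minkowski's inequality over a compact parameter domain --- no infinite sum over frequencies ever appears. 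If you want to salvage your route, you would need either stronger smoothness hypotheses on $\myC_{jk}$ (enough derivatives to beat $k^{(3d-6)/2}$) or a genuinely new orthogonality argument; as written, the proof is incomplete.
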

\begin{proof}
  Let $\theta=\theta(\theta_1,\dots,\theta_{d-1})\in\mathbb S^{d-1}$ be the standard parameterization of the sphere, and abusively write $\myC_{jk}(\theta_1,\dots,\theta_{d-1}):= \myC_{jk}(\theta(\theta_1,\dots,\theta_{d-1}))$.  By the Fundamental Theorem of Calculus, we can write
  \begin{align*}
    \myC_{jk}(\theta)
     & =
    \myC_{jk}(0,\theta_2,\dots,\theta_{d-1})
    +\int_0^{\theta_1} (\partial_\theta \myC_{jk}\partial_{\theta_1} \theta)(\theta_1',\theta_2,\dots,\theta_{d-1})\dd \theta_{1}'
  \end{align*}
  We can repeat this again for the first term $\myC_{jk}(0,\theta_2,\dots,\theta_{d-1})$  to get
  \begin{align*}
    \myC_{jk}(0,0,\theta_3,\dots,\theta_{d-1})
    +\int_0^{\theta_2} \partial_\theta \myC_{jk}(0,\theta_2',\theta_3,\dots,\theta_{d-1})\partial_{\theta_2} \theta(0,\theta_2',\theta_3,\dots,\theta_{d-1})\dd \theta_{2}',
  \end{align*}
  and for the integral term $\int_0^{\theta_1} (\partial_\theta \myC_{jk}\partial_{\theta_1} \theta)(\theta_1',\theta_2,\dots,\theta_{d-1})\dd \theta_{1}'$  to get
  \begin{multline*}
    \int_0^{\theta_1} (\partial_\theta \myC_{jk}\partial_{\theta_1} \theta)(\theta_1',0,\theta_3,\dots,\theta_{d-1})\dd \theta_{1}'  \\
    +
    \int_0^{\theta_2}\int_0^{\theta_1} (\partial_\theta^2 \myC_{jk} \partial_{\theta_1} \theta\partial_{\theta_2}\theta  + \partial_\theta \myC_{jk} \partial_{\theta_1}\partial_{\theta_2}\theta)(\theta_1',\theta_2',\theta_3,\dots,\theta_{d-1})\dd \theta_{1}'\dd\theta_2'.
  \end{multline*}
  We can inductively repeat this until we obtain terms of the form
  \begin{align*}
    c\int_0^{\theta_{i_1}}\dots\int_0^{\theta_{i_N}} G(\theta'_{i_1} ,\dots ,\theta'_{i_N}) \dd\theta'_{i_1} \dots \dd\theta'_{i_N}
  \end{align*}
  where $c$ is a combinatorial constant, $0\le M\le N\le d-1$,  and $G(\theta'_{i_1} ,\dots ,\theta'_{i_N})$ is the product of $\partial^{M}_\theta \myC_{jk}$ and various derivatives of the parameterisation $\theta$, all evaluated at the point
  \[
    (0,\dots,0,\theta'_{i_1},0,\dots,0,\theta_{i_N}',0,\dots,0)\in\mathbb R^{d-1},
  \]
  i.e. the point whose $i_k^\text{th}$ component is equal to $\theta_{i_k}'$, and all other components equal to 0. A precise enumeration could be given with the Fa\`a di Bruno formula, but the important point is that every term depends on the coordinates $\theta_1,\dots,\theta_{d-1}$ only in the integration limits. Thus, every integrand $G(\theta'_{i_1} ,\dots ,\theta'_{i_N})$ can be treated as a constant coefficient multiplier, as they satisfy \eqref{uniform bound} and \eqref{uniform difference bound} for each fixed choice of $\theta'_{i_1} ,\dots ,\theta'_{i_N}$. Hence, by applying Definition \ref{defn:dmb}, we have that for any $1<p<\infty$,
  \begin{align*}
     & \Big  \| \sum_{j,k} \myC_{jk}(\theta)F_{jk} \psi_{jk}(\theta)\Big \|_{L^p}                                                                                                                                           \\
     &
    \begin{multlined}
      \lesssim \Big \|\sum_{j,k}\myC_{jk}(0,\dots,0) F_{jk} \psi_{jk}(\theta)\Big \|_{L^p} \\
      +\!\!\sum_{M,N,i_1,\dots,i_N}\int_0^{\pi}\!\cdots\! \int_0^{\pi}\!\int_0^{2\pi} \Big \| \sum_{j,k}G(\theta'_{i_1} ,\dots ,\theta'_{i_N}) F_{jk} \psi_{jk}(\theta) \Big \|_{L^p}  \dd\theta'_{i_1} \dots \dd\theta'_{i_N}
    \end{multlined} \\
     & \lesssim \bigg\|\sum_{j,k} F_{jk} \psi_{jk}(\theta)\bigg\|_{L^{p}},\end{align*}
  which was what we wanted.
\end{proof}

\subsection{Proper perturbations}\label{subsec:assumLAa}

\begin{definition}[Proper perturbation]\label{def:pp}Assume that $ L_{\A, a}$ and $L_{\A, 0}$ satisfy Assumption \ref{eigenvalue assumption}. We call $ L_{\A, a}$ is a \emph{proper perturbation} of $L_{\A,0}$ if
  \begin{enumerate}
    \item There exists a constant $C_a$  that depends only on $\A$ and $a$ such that for all $j\geq \ell$,
          \[
            \left|\nu_{jk}^2 - \mu_{jk}^2-C_a\right| \leq \frac{C}{j}
          \]
          for all $1 \le k \le m_j$, where $C$ is a  constant independent of $j$ and $k$.
    \item \label{eigenfunction property} The corresponding eigenfunctions $\phi_{jk}$ and $e_{jk}$ $j\geq \ell$ can be written
          \[ \phi_{jk}(\theta)=e_{jk}(\theta)(1+ R_{jk}(\theta)),\]  where $R_{jk}$ satisfies with $N=\big\lfloor\frac{d-1}2\big\rfloor+1$ the following estimates:
          \begin{enumerate}
            \item $\sup_{\theta}|\overline{R}_{j}(\theta)|=O(\frac{1}{j})$ as $j\to \infty$, where  $\overline{R}_j=\frac{1}{m_j} \sum_{k=1}^{m_j} R_{jk}$,
            \item
                  $\sup_{2^L\geq \ell, \theta}2^{(N-1)L}\sum_{j=2^L}^{2^{L+1}}|\mathfrak D^N \overline{R}_{j}(\theta)|  \lesssim \ell^{-1}$,
            \item  $\sup_{2^L \geq \ell, \theta\in \mathbb S^{d-1}}|\partial_\theta^{d-1} \overline{R}_{j}(\theta)|\lesssim \ell^{-1}$,
            \item $\sup_{2^L\geq \ell, \theta}2^{(N-1)L}\sum_{j=2^L}^{2^{L+1}}|\mathfrak D^N \partial_\theta^{d-1} \overline R_{j}(\theta)|  \lesssim \ell^{-1}$.
          \end{enumerate}
  \end{enumerate}
\end{definition}
\begin{remark} In some cases, in order to estimate the derivatives of the reminder terms, we can further decompose $R_{jk}=\sum_{m=1}^d R_{jk}^{m}$ with the $m^{\text{th}}$ reminder term $R_{jk}^m$ satisfying
  \begin{align*}
    \|R_{jk}^{m}\|_{L^\infty} =O\Big(\frac{1}{j^m}\Big).
  \end{align*}
  For $m\leq d-1$, we can control the derivatives using the explicit formula. For $R_{jk}^d$, we can use triangle inequality and the fact that
  $\sum_{jk} \frac{1}{j^d}\lesssim \sum_{j}\frac1{j^2}<\infty$
  to get the boundedness in $L^p$. We will see this in Section \ref{subsec:AinZ}.
\end{remark}

\begin{remark} When $d=2$, $\A \in W^{1, \infty}(\mathbb S^1)$ and $a\in W^{1, \infty}(\mathbb S^1)$, $L_{\A, a}$ is a proper perturbation of $L_{\A, 0}$ by the results in \cite{FFFP}.
  \todo{In higher dimensions, we do not have any nontrivial examples yet.}
\end{remark}
\begin{remark}
  An important question that we will not address in the present paper is the following:
  for a fixed $\A$, what is the minimal regularity of $a(\theta)$ such that $L_{\A, a}$ is a proper perturbation of $L_{\A,0}$?
\end{remark}
\begin{lemma}Assume that $\{e_\alpha\} \in \DMB$. If $ L_{\A, a}$ is a proper perturbation of $ L_{\A, 0}$, then $\{\phi_\alpha\} \in \DMB$.  That is,
  whenever $\{\myC_{jk}\}_{(j,k) \in \IL}$ satisfies the estimates  \eqref{uniform bound} and \eqref{uniform difference bound},
  the perturbed eigenfunctions $\{\phi_{jk}\}$ satisfy
  \begin{align}
    \bigg\|\sum_{j = \ell}^\infty  \sum_{k=1}^{m_j}\myC_{jk} F_{jk} \phi_{jk}(\theta)\bigg\|_{L^{p}(\mathbb S^{d-1})} \lesssim \bigg\|\sum_{j = \ell}^\infty   \sum_{k=1}^{m_j} F_{jk} \phi_{jk}(\theta)\bigg\|_{L^{p}(\mathbb S^{d-1})}.
  \end{align}
\end{lemma}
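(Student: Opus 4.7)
The plan is to exploit the pointwise factorization $\phi_{jk}(\theta)=e_{jk}(\theta)\bigl(1+R_{jk}(\theta)\bigr)$ provided by Definition \ref{def:pp} to transfer the discrete multiplier property from $\{e_\alpha\}$ to $\{\phi_\alpha\}$. The key observation is that items (2a)--(2d) in the proper-perturbation definition are engineered to match exactly the four hypotheses of Lemma \ref{lem:Dismul} applied with the coefficient function $R_{jk}(\theta)$, and that all the bounds there are of order $\ell^{-1}$ or better.

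For the forward inequality, I would first write
\[
\sum_{j,k}\myC_{jk}F_{jk}\phi_{jk}
=\sum_{j,k}\myC_{jk}F_{jk}e_{jk}
+\sum_{j,k}R_{jk}(\theta)\bigl(\myC_{jk}F_{jk}\bigr)e_{jk}.
\]
The first sum is bounded by $\|\sum_{j,k}F_{jk}e_{jk}\|_{L^p}$ through the DMB property of $\{e_\alpha\}$ applied with the multiplier $\{\myC_{jk}\}$, while the second sum is bounded by the same quantity via an application of Lemma \ref{lem:Dismul} (with variable coefficient $R_{jk}(\theta)$ acting on the function $\sum_{j,k}\myC_{jk}F_{jk}e_{jk}$) followed again by the DMB property. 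Together these yield $\|\sum_{j,k}\myC_{jk}F_{jk}\phi_{jk}\|_{L^p}\lesssim\|\sum_{j,k}F_{jk}e_{jk}\|_{L^p}$.

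For the reverse inequality, running the same decomposition with $\myC\equiv1$ gives $\sum_{j,k}F_{jk}e_{jk}=\sum_{j,k}F_{jk}\phi_{jk}-\sum_{j,k}R_{jk}(\theta)F_{jk}e_{jk}$, and a second application of Lemma \ref{lem:Dismul} bounds the last term by $\eta\|\sum_{j,k}F_{jk}e_{jk}\|_{L^p}$ with $\eta\lesssim\ell^{-1}$. A Neumann-type absorption---valid once $\eta<1/2$, which we may arrange by enlarging $\ell$ so as to incorporate finitely many low modes into $\II$---yields $\|\sum_{j,k}F_{jk}e_{jk}\|_{L^p}\lesssim\|\sum_{j,k}F_{jk}\phi_{jk}\|_{L^p}$; combining this with the forward inequality completes the proof. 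The main obstacle is ensuring that the implicit constant in Lemma \ref{lem:Dismul} depends linearly on the bounds of its coefficient, so that the $\ell^{-1}$-smallness of the proper-perturbation constants translates into a small operator norm; this linearity can be read off from the proof of Lemma \ref{lem:Dismul}, whose argument reduces to finitely many integrated applications of Definition \ref{defn:dmb}, each contributing a factor proportional to its own hypothesis bound.
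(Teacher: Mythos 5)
Your proposal is correct and follows essentially the same route as the paper: the identical decomposition $\phi_{jk}=e_{jk}(1+R_{jk})$, an application of Lemma \ref{lem:Dismul} to the $R_{jk}$-term (the paper feeds $\myC_{jk}R_{jk}$ in as the variable coefficient in one shot, whereas you factor it into Lemma \ref{lem:Dismul} with $R_{jk}$ followed by the DMB property with $\myC_{jk}$ --- a cosmetic difference), and the same absorption argument via the $O(\ell^{-1})$ smallness of $R_{jk}$ for the reverse inequality. Your closing remark about tracking the linear dependence of the constant in Lemma \ref{lem:Dismul} on the coefficient bounds is a valid point that the paper leaves implicit in writing the absorbed term as $\frac{C}{\ell}\|\cdot\|$.
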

\begin{proof}
  From \(\phi_{jk}=e_{jk}(\theta)(1 + R_{jk}(\theta))\),  triangle inequality and \(\{e_\alpha\}_\alpha \in \DMB\), we apply Lemma \ref{lem:Dismul} with $\myC_{jk} R_{jk}$ as the variable coefficients to get
  \[
    \bigg\|\sum_{j,k} \myC_{jk} F_{jk} \phi_{jk}\bigg\|_{L^p}
    \lesssim \bigg\|\sum_{j,k} \myC_{jk} F_{jk} e_{jk} \bigg\|_{L^p}
    + \bigg\|\sum_{j,k} \myC_{jk} R_{jk} F_{jk} e_{jk} \bigg\|_{L^p}
    \lesssim \bigg\|\sum_{j,k} F_{jk} e_{jk} \bigg\|_{L^p} .
  \]
  To finish, we write \(e_{jk} = \phi_{jk} - e_{jk} R_{jk}\) and use the fact that \(R_{jk}\) is small for $j\gg1$,
  \begin{align*}
    \bigg\|\sum_{j,k} F_{jk} e_{jk}\bigg\|_{L^{p}} & \leq \bigg\|\sum_{j,k} F_{jk} \phi_{jk}\bigg\|_{L^{p}}+\frac{C}{\ell}\bigg\|\sum_{j,k} F_{jk} e_{jk}\bigg\|_{L^{p}},
  \end{align*}
  so the second term on the right hand side can be absorbed into the left side. So the result is proven.
  %
\end{proof}


\subsection{Main result}
We are now ready to state our main result. From now on, we let $n(d):=\frac{d-2}2$.

\begin{theorem}[$L^p$-boundedness of $W$ and $W^*$]\label{Main result} Let $d\geq2$,  and  $L_{\A, 0}$,  $L_{\A, a}$ satisfy Assumption \ref{eigenvalue assumption}.  Assume that the eigenfunctions of $L_{\A,0}$,  $\{e_\alpha\}_\alpha\in\DMB$ and $ L_{{\A, a}}$ is a proper perturbation of $ L_{{\A, 0}}$. Then
  $W$ defined in \eqref{equ:Wdef} is bounded on $L^p(\R^d)$ for \(p\in(1,\infty)\) satisfying
  \[
    \frac{n(d)-\tilde{\nu}_1}{d}<\frac{1}{p}<\frac{d-n(d)+\tilde{\mu}_1}{d},
  \]
  %
  and $W^*$ defined in \eqref{equ:Wastdef} is bounded on $L^p(\mathbb R^d)$ for \(p\in (1,\infty)\) satisfying
  \[
    \frac{n(d)-\tilde{\mu}_1}{d}<\frac{1}{p}<\frac{d-n(d)+\tilde{\nu}_1}{d},
  \]
  and $\tilde{\mu}_1=\sqrt{\lambda_1(\A ,0)+n(d)^2}$, $\tilde{\nu}_1=\sqrt{\lambda_1(\A ,a)+n(d)^2}$. In particular, if $\tilde{\mu}_1, \tilde{\nu}_1 \geq n(d)$, then $W$ and $W^*$ are bounded on $L^p(\mathbb R^d)$ for all $1<p<\infty$.
\end{theorem}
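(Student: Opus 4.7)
The plan is to split $W$ according to the angular index into a finite-rank low-frequency part $W_L$ (over $\alpha\in\II$) and an infinite high-frequency part $W_H$ (over $\alpha\in\IL$). The low-frequency part is handled by the classical $L^p$-bounds on the radial Hankel composition; this is where the stated range of $p$ enters, with the worst constraint coming from the smallest Bessel orders $\tilde{\mu}_1,\tilde{\nu}_1$. The high-frequency part exploits that, under the proper perturbation hypothesis, one has $\tilde{\mu}_\alpha-\tilde{\nu}_\alpha=O(1/j)$, so the Hankel composition is close to the identity and the deviation is small enough to be absorbed via Lemma \ref{lem:Dismul}.

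For $W_L$, each summand $f\mapsto(\hankel_{\tilde{\nu}_\alpha}\hankel_{\tilde{\mu}_\alpha}f_\alpha)(r)\phi_\alpha(\theta)$ is the composition of a rank-one angular operator (bounded on $L^p(\R^d)$ for every $p\in(1,\infty)$ because $e_\alpha,\phi_\alpha$ are smooth and bounded) with a radial Hankel composition on $L^p(r^{d-1}\,\dd r)$. The latter is a Weber--Schafheitlin kernel operator whose $L^p$-boundedness is a standard Bessel asymptotics computation, and the resulting range of $p$ is precisely the one stated in the theorem (the worst case being $\alpha$ realizing $\tilde{\mu}_1,\tilde{\nu}_1$).

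For $W_H$, write $\phi_\alpha=e_\alpha(1+R_\alpha)$ using Definition \ref{def:pp} and split $W_H=W_H^{\flat}+W_H^{\sharp}$, where
\[
W_H^{\flat}f=\sum_{\alpha\in\IL}(\hankel_{\tilde{\nu}_\alpha}\hankel_{\tilde{\mu}_\alpha}f_\alpha)(r)\,e_\alpha(\theta)
\]
and $W_H^{\sharp}$ carries the extra factor $R_\alpha(\theta)$. Since the sequence $\{R_\alpha\}$ meets the hypotheses of Lemma \ref{lem:Dismul} with a constant $O(\ell^{-1})$, the remainder $W_H^{\sharp}$ is absorbable into $W_H^{\flat}$ for $\ell$ large. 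For $W_H^{\flat}$, exploit the identity
\[
\hankel_{\tilde{\nu}_\alpha}\hankel_{\tilde{\mu}_\alpha}=I+\bigl(\hankel_{\tilde{\nu}_\alpha}-\hankel_{\tilde{\mu}_\alpha}\bigr)\hankel_{\tilde{\mu}_\alpha}.
\]
The $I$-contribution equals $\sum_{\alpha\in\IL}f_\alpha(r)e_\alpha(\theta)$, an orthogonal spectral projection which is $L^p$-bounded by the $\DMB$ hypothesis on $\{e_\alpha\}$. For the error, represent the Bessel-order difference by
\[
J_{\tilde{\nu}_\alpha}(z)-J_{\tilde{\mu}_\alpha}(z)=(\tilde{\nu}_\alpha-\tilde{\mu}_\alpha)\int_0^1\partial_\nu J_\nu(z)\bigl|_{\nu=\tilde{\mu}_\alpha+t(\tilde{\nu}_\alpha-\tilde{\mu}_\alpha)}\,\dd t,
\]
using $\tilde{\nu}_\alpha-\tilde{\mu}_\alpha=(\nu_\alpha^2-\mu_\alpha^2)/(\tilde{\nu}_\alpha+\tilde{\mu}_\alpha)=O(1/j)$ from Definition \ref{def:pp}. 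After dyadic Littlewood--Paley decomposition in the spatial frequency $\lambda$, the error operator becomes a discrete multiplier with variable coefficients $C_{jk}(r,\theta)$ which, by the smoothness of $\partial_\nu J_\nu$ and the iterated proper-perturbation estimates, satisfy the uniform boundedness and $N$-fold finite-difference conditions of Lemma \ref{lem:Dismul}. Applying that lemma yields the error bound by $\|f\|_{L^p}$.

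The main obstacle I expect is the last step: verifying the uniform $N$-fold discrete-difference bounds in $j$ on the coefficient symbols $C_{jk}$. This requires quantitative control of $\partial_\nu^N J_\nu(z)$ in the large-order regime, combined with iterated finite differences in $j$ of the Bessel-order gap $\tilde{\nu}_{jk}-\tilde{\mu}_{jk}$, using all the regularity imposed by Definition \ref{def:pp} on the remainder $R_{jk}$ and the proper-perturbation control on $\nu_{jk}^2-\mu_{jk}^2$. Once these symbol bounds are established, the argument closes by Lemma \ref{lem:Dismul}. The statement for $W^*$ follows symmetrically, interchanging the roles of $\tilde{\mu}_\alpha$ and $\tilde{\nu}_\alpha$ throughout, which flips the range of admissible $p$ as in the theorem.
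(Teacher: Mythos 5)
Your overall architecture matches the paper's: the split into a finite low-frequency part and a high-frequency part, the absorption of the remainder carrying $R_\alpha$ via Lemma \ref{lem:Dismul} and the $\DMB$ hypothesis, and the observation that the $p$-range is dictated by the smallest Bessel orders $\tilde\mu_1,\tilde\nu_1$ (the paper gets this from the Mellin-multiplier Proposition \ref{boundedness of Mellin multiplier} rather than direct Weber--Schafheitlin asymptotics, but that is a cosmetic difference). The problem is in your treatment of the high-frequency core $W_H^\flat$, i.e.\ the paper's term $I_2$.

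The gap is the claim that, after writing $\hankel_{\tilde\nu_\alpha}\hankel_{\tilde\mu_\alpha}=I+(\hankel_{\tilde\nu_\alpha}-\hankel_{\tilde\mu_\alpha})\hankel_{\tilde\mu_\alpha}$ and a Littlewood--Paley decomposition, ``the error operator becomes a discrete multiplier with variable coefficients $C_{jk}(r,\theta)$'' to which Lemma \ref{lem:Dismul} applies. Lemma \ref{lem:Dismul} only handles pointwise multiplication by angular coefficients; it cannot absorb an operator that is nonlocal in $r$. And the error operator here is genuinely nonlocal and singular: its Weber--Schafheitlin kernel contains, near the diagonal $s=r$, a Calder\'on--Zygmund-type piece of size $\sin(\pi b_{jk})\,(1-(s/r)^2)^{-1}$ with $b_{jk}=(\tilde\mu_{jk}-\tilde\nu_{jk})/2=O(1/j)$, plus a logarithmic piece, and off the diagonal it decays only like $(s/r)^{\tilde\mu_{jk}}$. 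The factor $O(1/j)$ does not rescue you: the kernel $|r-s|^{-1}$ is not absolutely integrable for any fixed $(j,k)$, and even granting a CZ bound per mode, summing $m_j=O(j^{d-2})$ modes per cluster against the gain $1/j$ diverges for $d\ge 4$. What the paper does at this point --- and what is missing from your sketch --- is a separation of variables near the diagonal: it replaces the exponent $\tilde\mu_{jk}$ in the singular part by the fixed value $\tilde\mu_{\ell,1}$, so that the $(j,k)$-dependence sits entirely in the scalar coefficients $\sin(\pi b_{jk})$ (handled by the $\DMB$ property) while the radial singular integral, now independent of $(j,k)$, is bounded by the one-dimensional singular-integral Lemma \ref{lem:CZbd}; the discrepancy between $\tilde\mu_{jk}$ and $\tilde\mu_{\ell,1}$ kills the singularity and is treated by Young's inequality, and the remaining log and series-error terms are shown to satisfy \eqref{uniform bound}--\eqref{uniform difference bound} with an integrable $(r,s)$-envelope. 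Without this (or an equivalent mechanism exploiting the cancellation between the two one-sided singular integrals), your argument does not close; the difficulty is not merely ``verifying symbol bounds on $\partial_\nu^N J_\nu$'' as you anticipate, but the fact that the object you want to feed into Lemma \ref{lem:Dismul} is not of the form that lemma accepts.
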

\begin{remark}
  Note that the restriction on \(p\) depends on the angular operators $L_{\A,0}, L_{\A,a}$ only through their first eigenvalues.\end{remark}
\begin{remark}
  Owing to our use of the theory of singular integrals and the discrete multiplier theorem, we are unable to prove the endpoint cases $p=1,\infty$ with this method.
\end{remark}
\begin{remark}
  For $d\ge3$, Theorem \ref{Main result} includes the inverse square potential which is given by $\A=0$ and $a$ constant. For more general potentials, our result shows that to study $\LL_{\A,a}$ it suffices to understand $\LL_{\A,0}$ and the perturbation theory for the angular component.
\end{remark}

\subsection{The two-dimensional case}

In dimension $d=2$, the spectral assumptions in our main theorem are generically satisfied, which leads to the following quite complete result. Let us define $A,\tilde a$, and $\tilde A$ by\begin{equation}\label{equ:alpha}
  \begin{gathered}
    A(\theta)=\A(\cos\theta,\sin\theta)\cdot (-\sin\theta,\cos\theta),\\
    \tilde a=\frac{1}{2\pi}\int_0^{2\pi} a(\theta)\dd \theta, \qquad
    \tilde A=\frac{1}{2\pi}\int_0^{2\pi} A(\theta)\dd\theta.
  \end{gathered}
\end{equation}
\begin{theorem}   \label{results in dimension two}
  Let $a\in W^{1,\infty}(\mathbb{S}^{1},\mathbb{R})$, $\A \in W^{1,\infty}(\mathbb{S}^{1},\R^2)$ satisfying \eqref{eq:transversal}  and \eqref{equ:a-Aassum} hold. If $\tilde A \notin \frac{1}{2} \mathbb Z$ or $\tilde A \in \frac{1}{2} \mathbb Z$ and $a(\theta)$ is symmetric across $\theta=\pi$ (i.e. $a(\pi-\theta)=a(\pi+\theta)$ for all $\theta\in[0,\pi]$), $W$ and $W^*$ defined in \eqref{equ:Wdef} and \eqref{equ:Wastdef}  are $L^p(\R^2)$-bounded for all $1<p<\infty$.
\end{theorem}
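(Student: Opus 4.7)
The plan is to reduce to Theorem \ref{Main result} by verifying each of its four hypotheses in the two-dimensional setting: Assumption \ref{eigenvalue assumption}, the $\DMB$ property for the eigenfunctions of $L_{\A,0}$, properness of the perturbation $L_{\A,a}$ of $L_{\A,0}$, and the lower bounds $\tilde\mu_1,\tilde\nu_1\geq n(2)=0$. The last of these is immediate: since $L_{\A,0}$ is a magnetic operator with non-negative quadratic form, $\lambda_1(\A,0)\geq 0$, while $\lambda_1(\A,a)\geq 0 = -(d-2)^2/4$ is exactly \eqref{equ:a-Aassum}. Once these hold, the admissible range for $1/p$ in Theorem \ref{Main result} contains $(0,1)$, yielding $L^p$-boundedness for all $p\in(1,\infty)$.

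The two-dimensional spectral analysis proceeds by a gauge transformation. Using the scalar magnetic field $A(\theta)$ and its mean $\tilde A$ from \eqref{equ:alpha}, the function $\varphi(\theta)=\int_0^\theta(A(\sigma)-\tilde A)\,d\sigma$ is well-defined on $\S^1$, and the unitary $U:f\mapsto e^{-i\varphi}f$ conjugates $L_{\A,0}$ into $(-i\partial_\theta+\tilde A)^2$ and $L_{\A,a}$ into $(-i\partial_\theta+\tilde A)^2+a(\theta)$. The reduced free operator has explicit eigenvalues $(k+\tilde A)^2$, $k\in\Z$, with eigenfunctions $e^{ik\theta}$, producing clusters of size $m_j=2$ around $\eta_j^2=j^2$ and verifying Assumption \ref{eigenvalue assumption}\eqref{assump1} for $L_{\A,0}$. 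The corresponding analysis for $L_{\A,a}$ is essentially contained in \cite[Lemma 2.1]{FFFP}, which also supplies the expansion $\phi_{jk}(\theta)=e_{jk}(\theta)(1+R_{jk}(\theta))$ with the remainder bounds required by Definition \ref{def:pp}, hence $L_{\A,a}$ is a proper perturbation of $L_{\A,0}$. Finally, since after the gauge transformation the $\{e_\alpha\}$ are (multiples of) the Fourier basis $\{e^{ik\theta}\}$, the discrete multiplier property $\{e_\alpha\}\in\DMB$ reduces to the classical Marcinkiewicz multiplier theorem on $L^p(\S^1)$.

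The principal obstacle is Assumption \ref{eigenvalue assumption}\eqref{assump2}, the simultaneous completion at low frequencies. When $\tilde A\notin\tfrac12\Z$, every eigenvalue $(k+\tilde A)^2$ of $L_{\A,0}$ is simple, and the clustering estimates force the number $\ell_0$ of low eigenvalues of $L_{\A,0}$ below $\eta_\ell+C_\A-\tfrac12$ to match that of $L_{\A,a}$ automatically. The delicate case is $\tilde A\in\tfrac12\Z$: the identity $(k+\tilde A)^2=((-k-2\tilde A)+\tilde A)^2$ forces pairs of eigenvalues of $L_{\A,0}$ to coincide, and without additional structure $a(\theta)$ could split these pairs asymmetrically across the threshold, producing a mismatch in the low-frequency counts. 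The symmetry hypothesis $a(\pi-\theta)=a(\pi+\theta)$ is tailored to commute with the reflection $\theta\mapsto 2\pi-\theta$ which, after the gauge transformation, exchanges the paired eigenfunctions of the free operator; one checks that this reflection (in its gauge-compatible form) also commutes with $L_{\A,a}$, so its eigenspaces split into even and odd parts matching the two-fold degeneracy of $L_{\A,0}$, and consequently the cluster sizes and low-frequency counts coincide. Making this reflection-commutation argument precise, including the bookkeeping of how $\phi$ changes the parity, is the main technical point to be carried out; with it in hand, Theorem \ref{Main result} applies and delivers the claim.
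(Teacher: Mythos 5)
Your overall strategy---reduce to Theorem \ref{Main result} via the gauge transformation, read off the explicit eigenbasis of $L_{\A,0}$, invoke the 2D asymptotics of \cite{FFFP} for the perturbed eigenvalues and eigenfunctions, and get $\DMB$ from Marcinkiewicz---is exactly the paper's route. However, there are two concrete gaps. First, in the case $\tilde A\notin\frac12\Z$ you assert that the simultaneous completion (Assumption \ref{eigenvalue assumption}\eqref{assump2}) holds ``automatically'' because the eigenvalues are simple. It does not follow automatically: one must show that the number $n^*$ of eigenvalues of $L_{\A,a}$ lying below the threshold equals $2\ell-1$, i.e.\ that no eigenvalue has migrated across the cutoff relative to the cluster structure. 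The paper proves this by gauging to the $\bar A$-periodic eigenvalue problem and applying the comparison principle with the constant potentials $a_*=-\|a\|_{L^\infty}-1$ and $a^*=\|a\|_{L^\infty}+1$, trapping $\lambda_{2j}(\bar A,a)$ and $\lambda_{2j+1}(\bar A,a)$ in pairwise disjoint intervals $I_j$ and thereby identifying them with $\nu_{j1}^2,\nu_{j2}^2$. Some such counting argument is required and is absent from your write-up.

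Second, and more seriously, in the degenerate case $\tilde A\in\frac12\Z$ your proposal only addresses the matching of multiplicities via a reflection-symmetry argument, and takes for granted that the eigenfunction expansion required by Definition \ref{def:pp}\eqref{eigenfunction property} is supplied by \cite[Lemma 2.1]{FFFP}. In this case it is not: when the free eigenvalue $j^2$ (or $(j+\tfrac12)^2$) is doubly degenerate, the perturbed eigenfunctions have the form
\begin{equation}
  \phi_{j1}=e_{j1}\bigl(1+R_{j1,\myc}+R_{j1}^2\bigr)+e_{j2}R_{j1,\mys},
\end{equation}
with a cross term mixing $e_{j1}$ and $e_{j2}$ and a second-order remainder $R_{jk}^2$ satisfying only $\|R_{jk}^2\|_{L^\infty}=O(j^{-2})$ with no control on its derivatives. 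Thus $L_{\A,a}$ is not literally a proper perturbation of $L_{\A,0}$ here, and Theorem \ref{Main result} cannot be invoked off the shelf; the paper has to write out the explicit cosine-Fourier formulas for $R_{jk,\mys}, R_{jk,\myc}$ (this is where the symmetry of $a$ across $\pi$ is actually used, to diagonalize $a$ on each two-dimensional eigenspace in the sine/cosine basis) and then dispose of the $R_{jk}^2$ contribution by a separate triangle-inequality estimate using the summability of $j^{-2}$, bypassing the discrete multiplier theorem for that piece. Without this additional analysis your reduction to Theorem \ref{Main result} does not close in the half-integer case.
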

Theorem \ref{results in dimension two} will be proven in Section \ref{sec: dimension two} using Theorem \ref{Main result}. We now state some of the consequences of Theorem \ref{results in dimension two}.


For the wave propagator, the authors in \cite{FZZ} has proved the following estimate for the electric-free operator,
\begin{equation}\label{equ:dispwala0}
  \bigg\|\frac{\sin(t\sqrt{\mathcal{L}_{\A,0}})}{\sqrt{\mathcal{L}_{\A,0}}}\varphi(\sqrt{\mathcal{L}_{\A,0}})f\bigg\|_{L^\infty(\R^2)}
  \leq C(1+|t|)^{-\frac12}\|f\|_{L^1(\R^2)},
\end{equation}
where $\text{supp}\;\varphi\subset[1/2,2].$ Interpolating this with trivial $L^2$ bound, we have  for any $1\leq p\leq2$
\begin{equation*}
  \bigg\lVert\frac{\sin(t\sqrt{\mathcal{L}_{\A,0}})}{\sqrt{\mathcal{L}_{\A,0}}}\varphi\big(\sqrt{\mathcal{L}_{\A,0}}\big)f\bigg\rVert_{L^{p'}(\R^2)}
  \leq C(1+|t|)^{-\frac12(\frac1p-\frac1{p'})}\|f\|_{L^p(\R^2)},
\end{equation*}
where $\frac1p+\frac1{p'}=1$.  This together with Theorem  \ref{results in dimension two} and the intertwining property  \eqref{equ:interwinF12} gives the corresponding result for $\mathcal L_{\A,a}$:
\begin{corollary}[Dispersive estimates of wave propagator]\label{prop:disp}
  Suppose that $\A(\theta)$ and $a(\theta)$ satisfy the assumptions of Theorem \ref{results in dimension two}. Let $\varphi\in C_c^\infty(\R\setminus\{0\})$ with
  $0\leq\varphi\leq1$ and $\text{supp}\;\varphi\subset[1/2,2]$. Then, for $1<p\le 2$,
  \begin{equation}\label{equ:dispwala0Lp}
    \bigg\lVert\frac{\sin(t\sqrt{\mathcal{L}_{\A,a}})}{\sqrt{\mathcal{L}_{\A,a}}}\varphi(\sqrt{\mathcal{L}_{\A,a}})f\bigg\rVert_{L^{p'}(\R^2)}
    \leq C(1+|t|)^{-\frac12(\frac1p-\frac1{p'})}\|f\|_{L^p(\R^2)}.
  \end{equation}
\end{corollary}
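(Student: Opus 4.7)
The plan is to reduce the dispersive estimate for $\mathcal L_{\A, a}$ to the already known estimate for $\mathcal L_{\A, 0}$ by sandwiching the propagator between the intertwining operators $W$ and $W^*$. The key input is Theorem \ref{results in dimension two}, which, under the hypotheses of the corollary, gives $L^p(\R^2)$-boundedness of both $W$ and $W^*$ for the full range $1<p<\infty$.

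First, I would fix $1<p\leq 2$ and apply the intertwining identity \eqref{equ:interwinF12} of Lemma \ref{lem:l2bound} with the bounded Borel function $F(\lambda)=\frac{\sin(t\sqrt\lambda)}{\sqrt\lambda}\varphi(\sqrt\lambda)$ (this is bounded because $\varphi\in C_c^\infty(\R\setminus\{0\})$ with support in $[1/2,2]$, so $F$ is compactly supported and smooth). This yields, as operators on $L^2(\R^2)$,
\[
\frac{\sin(t\sqrt{\mathcal L_{\A, a}})}{\sqrt{\mathcal L_{\A, a}}}\varphi(\sqrt{\mathcal L_{\A, a}})
= W\,\frac{\sin(t\sqrt{\mathcal L_{\A, 0}})}{\sqrt{\mathcal L_{\A, 0}}}\varphi(\sqrt{\mathcal L_{\A, 0}})\,W^*.
\]
Since $W,W^*$ are bounded on every $L^p(\R^2)$ with $1<p<\infty$ in the present setting, and since the unperturbed spectrally localized propagator is bounded on $L^2$ (by the spectral theorem) and on $L^{p'}$ (after interpolation, as mentioned in the statement), a density argument extends this operator identity to the appropriate dense subspace $L^p\cap L^2$, so that the identity makes sense as an equality in $L^{p'}(\R^2)$ when applied to $f\in L^p(\R^2)\cap L^2(\R^2)$.

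Then I would chain the three bounds: for $f\in L^p\cap L^2$,
\begin{align*}
\bigg\|\frac{\sin(t\sqrt{\mathcal L_{\A, a}})}{\sqrt{\mathcal L_{\A, a}}}\varphi(\sqrt{\mathcal L_{\A, a}})f\bigg\|_{L^{p'}(\R^2)}
&= \bigg\|W\,\frac{\sin(t\sqrt{\mathcal L_{\A, 0}})}{\sqrt{\mathcal L_{\A, 0}}}\varphi(\sqrt{\mathcal L_{\A, 0}})W^*f\bigg\|_{L^{p'}(\R^2)}\\
&\lesssim \bigg\|\frac{\sin(t\sqrt{\mathcal L_{\A, 0}})}{\sqrt{\mathcal L_{\A, 0}}}\varphi(\sqrt{\mathcal L_{\A, 0}})W^*f\bigg\|_{L^{p'}(\R^2)}\\
&\lesssim (1+|t|)^{-\frac12(\frac1p-\frac1{p'})}\|W^*f\|_{L^p(\R^2)}\\
&\lesssim (1+|t|)^{-\frac12(\frac1p-\frac1{p'})}\|f\|_{L^p(\R^2)},
\end{align*}
using successively the $L^{p'}$-boundedness of $W$ (Theorem \ref{results in dimension two}), the dispersive estimate for the electric-free operator displayed before the corollary, and the $L^p$-boundedness of $W^*$. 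A standard density argument then extends the estimate to all $f\in L^p(\R^2)$.

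I do not anticipate any serious obstacle here: given the machinery already in place, the argument is essentially a three-line sandwich. The only point requiring a small amount of care is the justification that the operator identity \eqref{equ:interwinF12}, originally formulated as an $L^2$ identity, may be applied to $W^*f$ and then to elements of $L^p\cap L^2$ with the conclusions controlled in $L^{p'}$; this is handled cleanly by the fact that the unperturbed spectrally truncated propagator is simultaneously bounded on $L^2$ and $L^{p'}$, which allows one to pass between the $L^2$ intertwining and the desired $L^p\to L^{p'}$ mapping property by density of $L^p\cap L^2$ in $L^p$.
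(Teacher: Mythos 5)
Your proposal is correct and is essentially the paper's own argument: the paper derives the corollary exactly by combining the interpolated $L^p\to L^{p'}$ dispersive estimate for $\mathcal L_{\A,0}$ with the intertwining identity \eqref{equ:interwinF12} and the $L^p$-boundedness of $W$ and $W^*$ from Theorem \ref{results in dimension two}. The only difference is that you spell out the density/$L^p\cap L^2$ justification, which the paper leaves implicit.
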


Recently, Fanelli--Zhang--Zheng \cite{FZZ23} extended the uniform resolvent estimates for Laplacian  to the pure magnetic potential. They proved that for all \(p\) and  \(q\) satisfying
\begin{equation}\label{equ:pq2d}
  \frac23\le \frac1p-\frac1q <1, \ \frac34<\frac1p\le1, \ \text{and} \ 0\le \frac1q<\frac14,
\end{equation}
the following resolvent estimate is valid,
\begin{equation}\label{equ:ureLA0}
  \left\|(\mathcal{L}_{\A,0}-z)^{-1}f\right\|_{L^{q} (\R^2)}\leq
  C|z|^{\frac1p-\frac1q-1}\|f\|_{L^{p}(\R^2)},\quad \text{ for all }\;z\in\C\setminus \R^+.
\end{equation}
An immediate consequence of  Theorem \ref{results in dimension two} and \eqref{equ:ureLA0} is

\begin{corollary}[Uniform resolvent estimate]\label{prop:ure}
  Assume that $\A(\theta)$ and $a(\theta)$ satisfy the assumptions of Theorem  \ref{results in dimension two}. Let $(p,q)$ satisfy \eqref{equ:pq2d} and $1<p, q<\infty$. Then,  we have
  \begin{equation}\label{equ:ureLAa}
    \left\|(\mathcal{L}_{\A,a}-z)^{-1}f\right\|_{L^{q} (\R^2)}\leq
    C|z|^{\frac1p-\frac1q-1}\|f\|_{L^{p}(\R^2)},\quad \text{ for all }\;z\in\C\setminus \R^+.
  \end{equation}

\end{corollary}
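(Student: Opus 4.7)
The plan is to derive Corollary \ref{prop:ure} as a direct consequence of Theorem \ref{results in dimension two} combined with the intertwining property of Lemma \ref{lem:l2bound}, using \eqref{equ:ureLA0} as a black box. Concretely, for any $z \in \C \setminus \R^+$, the function $F_z(\lambda) \coloneqq (\lambda-z)^{-1}$ is a bounded Borel function on the spectra of $\mathcal{L}_{\A,0}$ and $\mathcal{L}_{\A,a}$, so Lemma \ref{lem:l2bound} applies and yields the operator identity
\[
(\mathcal{L}_{\A,a} - z)^{-1} = W\,(\mathcal{L}_{\A,0} - z)^{-1}\,W^*
\]
in $L^2(\R^2)$.

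Under the hypotheses of Theorem \ref{results in dimension two} (which are exactly the hypotheses we are assuming), the operators $W$ and $W^*$ are bounded on $L^r(\R^2)$ for every $1<r<\infty$. Consequently, given $f\in L^p(\R^2)\cap L^2(\R^2)$ with $(p,q)$ satisfying \eqref{equ:pq2d} and $1<p,q<\infty$, we can chain three estimates: first $\|W^* f\|_{L^p} \lesssim \|f\|_{L^p}$ by the $L^p$-boundedness of $W^*$; second, $\bigl\|(\mathcal{L}_{\A,0}-z)^{-1} W^* f\bigr\|_{L^q} \lesssim |z|^{\frac1p - \frac1q - 1} \|W^* f\|_{L^p}$ by the hypothesis \eqref{equ:ureLA0}; and third, $\bigl\|W(\mathcal{L}_{\A,0}-z)^{-1} W^* f\bigr\|_{L^q} \lesssim \bigl\|(\mathcal{L}_{\A,0}-z)^{-1} W^* f\bigr\|_{L^q}$ by the $L^q$-boundedness of $W$. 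Concatenating the three bounds and invoking the operator identity gives \eqref{equ:ureLAa} on the dense subset $L^p(\R^2) \cap L^2(\R^2)$.

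The only minor technical point is to justify that the operator identity, proved in $L^2$, extends to give the claimed mapping $L^p \to L^q$. This is immediate because the composition $W(\mathcal{L}_{\A,0}-z)^{-1}W^*$, already bounded from $L^p$ to $L^q$ by the three-step chain above, agrees with $(\mathcal{L}_{\A,a}-z)^{-1}$ on the dense subset $L^p \cap L^2$, hence both sides extend continuously to all of $L^p$ to the same $L^q$-valued operator. I do not anticipate any real obstacle in this argument: all the hard work has already been done in establishing Theorem \ref{results in dimension two} (the $L^p$-boundedness of $W,W^*$) and in the FZZ result \eqref{equ:ureLA0}; the corollary is essentially a formal conjugation.
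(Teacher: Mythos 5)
Your proposal is correct and is exactly the argument the paper intends: the corollary is stated there as an immediate consequence of Theorem \ref{results in dimension two}, the intertwining identity \eqref{equ:interwinF12} applied to $F_z(\lambda)=(\lambda-z)^{-1}$, and the FZZ estimate \eqref{equ:ureLA0}, which is precisely your three-step chain. The density/extension remark is a standard technicality that the paper leaves implicit.
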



The Bochner--Riesz means of order $\delta$ associated with the any self-adjoint operator $H$ defined using spectral theory by
\begin{equation}\label{equ:BRmesLAa}
  S_R^\delta(H)=\Big(1-\frac{H}{R^2}\Big)^\delta_+,
\end{equation}
with $\delta\geq0,\;R>0$.
For the free Laplacian in dimension two, L. Carleson and P. Sj\"olin \cite{CS72} proved that for $p \in [1,2)\cup (2,\infty]$, the Bochner--Riesz means $S_R^\delta(-\Delta)f$ converges to $f$ in  $L^p(\R^2)$ if and only if
\begin{equation}\label{equ:panddel}
  \delta>\delta_c(p,2)\coloneq\max\Big\{0,2\Big|\frac12-\frac1p\Big|-\frac12\Big\},
\end{equation}
and $\delta\geq0$ when $p=2$. This result follows from the boundedness of $S^{\delta}_1(\Delta)$,
\begin{equation}\label{equ:BRRnbdLp}
  \|S^\delta_1(-\Delta)f\|_{L^p(\R^2)}\leq C\|f\|_{L^p(\R^2)},\quad \delta>\delta_c(p,2).
\end{equation}

Recently, Miao--Yan--Zhang \cite{MYZ} extended this to the pure magnetic case, that is
\begin{equation}\label{equ:BRRnbdLA0}
  \|S^\delta_1(\mathcal{L}_{\A,0})f\|_{L^p(\R^2)}\leq C\|f\|_{L^p(\R^2)},\quad \text{ for all }\;1\leq p\leq\infty,\;\delta>\delta(p,2).
\end{equation}
\begin{corollary}[$L^p$ boundedness for Bochner--Riesz means]\label{prop:BRLAa}
  Suppose that $\A(\theta)$ and $a(\theta)$ satisfy the assumptions of Theorem  \ref{results in dimension two}. Let $\delta>\delta_c(p,2)$. Then, we have
  \begin{equation}\label{equ:BRRnbdLAa}
    \|S^\delta_1(\mathcal{L}_{\A,a})f\|_{L^p(\R^2)}\leq C\|f\|_{L^p(\R^2)},\quad \text{ for all }\;1<p<\infty.
  \end{equation}
  As a consequence, we deduce that $S_R^\delta(\mathcal{L}_{\A,a})f$ converges to $f$ in  $L^p(\R^2)$ as $\delta>\delta_c(p,2)$ and $1<p<\infty.$

\end{corollary}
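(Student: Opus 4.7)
The plan is to reduce the estimate to the known Bochner--Riesz bound (\ref{equ:BRRnbdLA0}) for the magnetic Hamiltonian $\mathcal{L}_{\A,0}$ by conjugation with the intertwining operators. The key observation is that for any $\delta\geq 0$ and $R>0$, the function $F(\lambda) := (1-\lambda/R^2)^\delta_+$ is bounded and Borel-measurable, so Lemma \ref{lem:l2bound} applies directly and yields the operator identity
\[
S^\delta_R(\mathcal{L}_{\A,a}) = F(\mathcal{L}_{\A,a}) = W\, F(\mathcal{L}_{\A,0})\, W^{*} = W\, S^\delta_R(\mathcal{L}_{\A,0})\, W^{*}
\]
as bounded operators on $L^2(\R^2)$.

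Under the hypotheses on $\A$ and $a$, Theorem \ref{results in dimension two} ensures that $W$ and $W^{*}$ extend to bounded operators on every $L^p(\R^2)$ with $1<p<\infty$. Since $L^2\cap L^p$ is dense in $L^p$, the operator identity above extends from $L^2$ to $L^p$ by continuity. Specializing to $R=1$, the Miao--Yan--Zhang bound (\ref{equ:BRRnbdLA0}) for $S^\delta_1(\mathcal{L}_{\A,0})$ combined with the $L^p$-boundedness of $W,W^{*}$ gives
\[
\|S^\delta_1(\mathcal{L}_{\A,a})f\|_{L^p(\R^2)} \lesssim \|S^\delta_1(\mathcal{L}_{\A,0})W^{*}f\|_{L^p(\R^2)} \lesssim \|W^{*}f\|_{L^p(\R^2)} \lesssim \|f\|_{L^p(\R^2)},
\]
which is exactly (\ref{equ:BRRnbdLAa}). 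For the convergence statement, I would argue by a standard uniform-boundedness plus density approach: applying the same intertwining identity for general $R>0$, together with the scale invariance of both $\mathcal{L}_{\A,a}$ and $\mathcal{L}_{\A,0}$, yields $L^p$-bounds for $S^\delta_R(\mathcal{L}_{\A,a})$ that are uniform in $R>0$; convergence on a convenient dense class (for instance, finite linear combinations of elementary tensors $f_\alpha(r)\phi_\alpha(\theta)$ with $f_\alpha\in C^\infty_c(\R_+)$) is verified directly via the spectral representation; and a three-$\varepsilon$ approximation extends the convergence to all of $L^p(\R^2)$.

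I do not foresee any substantial obstacle: the heavy structural work has already been carried out in Theorem \ref{results in dimension two} and in the Bochner--Riesz estimate of \cite{MYZ}, so Corollary \ref{prop:BRLAa} drops out as a clean application of the intertwining framework. The only two minor technical points are the $L^p$-extension of the identity from Lemma \ref{lem:l2bound} (immediate from density and the $L^p$-boundedness of $W,W^{*}$) and the uniformity in $R$ used for the convergence statement (immediate from the scaling invariance of $\mathcal{L}_{\A,a}$).
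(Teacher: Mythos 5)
Your proposal is correct and follows exactly the route the paper intends: the corollary is stated as an immediate consequence of the intertwining identity \eqref{equ:interwinF12}, the $L^p$-boundedness of $W,W^*$ from Theorem \ref{results in dimension two}, and the Miao--Yan--Zhang bound \eqref{equ:BRRnbdLA0}, which is precisely your chain of inequalities. The convergence statement via scaling invariance, uniform boundedness in $R$, and density is likewise the standard argument the paper has in mind.
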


%
%
%


\section{Preliminary tools}\label{Sec:Prelim}
In this section we give some preliminaries for the proof of Theorem \ref{Main result}.
Let $\Gamma(z)=\int_0^\infty e^{-t} t^{z-1} \dd t$ be the Gamma function.

\begin{lemma}\label{lem:behab}
  Let $a>0$ and $|b|\leq1$. Then,
  \begin{equation}\label{equ:behab}
    \left| \frac{\Gamma(a+n+1)\Gamma(b+n+1)}{\Gamma(a+b+n+1) \Gamma(n+1)}\right|\leq C a^{|b|},\;\quad \text{ for all }\;n\in\N,
  \end{equation}
  as $a\to\infty$, where the constant $C$ is independent of $a$ and $n$.

\end{lemma}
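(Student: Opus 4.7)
The plan is to split the ratio of Gamma functions into two simpler factors, apply a uniform Gauss-type asymptotic to each, and conclude by an elementary case analysis on the sign of $b$. Concretely, one writes
\begin{equation*}
  \frac{\Gamma(a+n+1)\Gamma(b+n+1)}{\Gamma(a+b+n+1)\Gamma(n+1)}
  =\frac{\Gamma(b+n+1)}{\Gamma(n+1)}\cdot\frac{\Gamma(a+n+1)}{\Gamma(a+b+n+1)},
\end{equation*}
so that each factor is of the shape $\Gamma(z+\alpha)/\Gamma(z+\beta)$ with $|\alpha-\beta|\le 1$ and a large spectral parameter $z$ (equal to $n+1$ in the first factor and to $a+n+1$ in the second).

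The main tool is the standard uniform consequence of Stirling's asymptotic expansion of $\log\Gamma$: for each $M>0$ there exists $C_M>0$ such that
\begin{equation*}
  \left|\frac{\Gamma(z+\alpha)}{\Gamma(z+\beta)}\right|\le C_M\, z^{\alpha-\beta},
\end{equation*}
for all real $\alpha,\beta$ with $|\alpha|,|\beta|\le M$ and all $z\ge 1$ for which $z+\alpha$ and $z+\beta$ stay bounded away from the nonpositive integers. Applying this with $z=n+1$, $(\alpha,\beta)=(b,0)$ to the first factor, and with $z=a+n+1$, $(\alpha,\beta)=(0,b)$ to the second, gives for $|b|\le 1$
\begin{equation*}
  \left|\frac{\Gamma(a+n+1)\Gamma(b+n+1)}{\Gamma(a+b+n+1)\Gamma(n+1)}\right|
  \le C\,(n+1)^{b}(a+n+1)^{-b}
  =C\left(\frac{n+1}{a+n+1}\right)^{b}.
\end{equation*}

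The last step is to bound $\bigl((n+1)/(a+n+1)\bigr)^{b}$ by $a^{|b|}$ uniformly in $n\in\N$ for $a\ge 1$. If $0\le b\le 1$ the base lies in $(0,1]$, so the whole expression is at most $1\le a^{|b|}$. If $-1\le b<0$ one has
\begin{equation*}
  \left(\frac{n+1}{a+n+1}\right)^{b}=\left(\frac{a+n+1}{n+1}\right)^{|b|}\le(a+1)^{|b|}\le 2\,a^{|b|}\qquad(a\ge 1),
\end{equation*}
the supremum being attained at $n=0$. Combining both cases with the previous display yields the claim.

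I do not anticipate any substantive obstacle: the only delicate point is to verify that the implicit constants in Stirling's asymptotic are uniform in $b\in[-1,1]$ and in $n\in\N$, which is standard. The only genuinely degenerate configuration is $b=-1$, $n=0$, where $\Gamma(b+n+1)=\Gamma(0)$ is a pole; this case must either be excluded from the statement or handled by analytic continuation in $b$, and is harmless for the applications to Bessel-type oscillatory integrals for which this lemma is designed.
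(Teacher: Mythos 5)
Your proof is correct and follows essentially the same route as the paper: the paper likewise factors the ratio as $\frac{\Gamma(b+n+1)}{\Gamma(n+1)}\cdot\frac{\Gamma(a+n+1)}{\Gamma(a+b+n+1)}$ and invokes the Tricomi--Erd\'elyi/Stirling bounds $\bigl|\frac{\Gamma(a+n+1)}{\Gamma(a+b+n+1)}\bigr|\leq C(a+n+1)^{-b}$ and $\bigl|\frac{\Gamma(b+n+1)}{\Gamma(n+1)}\bigr|\leq C(n+1)^{b}$, leaving the final case analysis on the sign of $b$ implicit where you spell it out. Your remark about the degenerate configuration $b=-1$, $n=0$ is a fair caveat that the paper does not address, though it is harmless since in the application $|b|=|b_{jk}|\lesssim 1/j$ is small.
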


\begin{proof}
  It follows (see \cite{Tricomi}) from Stirling's formula that
  \(
  \big| \frac{\Gamma(a+n+1)}{\Gamma(a+b+n+1)}\big|  \leq C(a+n+1)^{-b}
  \)
  and
  \(
  \big|\frac{\Gamma(b+n+1)}{\Gamma(n+1)}\big|       \leq C(n+1)^b
  \), which implies the result.
\end{proof}

\subsection{Mellin multipliers} Recall the Hankel transform \(\hankel_\nu\) of order \(\nu\)  from \eqref{equ:Hanktranf}.
Given the definitions \eqref{equ:wjrdef}--\eqref{equ:Wastdef}, we are led to consider the composition $T_{\nu,\mu}  \coloneq \hankel_{\nu } \hankel_{\mu} $.
To study the $L^p$-boundedness of this operator,  we will use the Mellin transform, defined by
\[\mathcal M (f(s))(z)=\int_0^{\infty} f(s)s^{z-1}\dd s,\]
whenever the integral is absolutely convergent.

The Mellin transform turns the Hankel transform into multiplication by a certain ratio of Gamma functions. It follows that the operator $T_{\nu, \mu }$ is the Mellin multiplier
\begin{align*}
  \mathcal M (T_{\nu, \mu} f)(z)=m_{\nu, \mu}(z)\mathcal M ( f)(z)
\end{align*}
where \(n(d)=\frac{d-2}{2}\) as in Theorem \ref{Main result} and
\begin{align}
  m_{\nu, \mu}(z )=\frac{\Gamma(\frac{z+\nu-n(d)}{2}) \Gamma(1-\frac{z-\mu-n(d)}{2})}{\Gamma(1-\frac{z-\nu-n(d) }{2}) \Gamma(\frac{z+\mu-n(d)}{2})}.
\end{align}
This leads to the following $L^p$ boundedness result:
\begin{prop}\label{boundedness of Mellin multiplier}
  Let $\nu, \mu\geq 0$. The operator $T_{\nu, \mu}$ is bounded on $L^p(\R^+,r^{d-1}\dd r)$ for all $p\in(1,\infty)$ such that ${n(d)-\nu}<\frac{d}{p}<{2+n(d)+\mu}$.    In particular, if $\mu, \nu\geq \frac{d-2}{2}$, then $T_{\nu, \mu }$ is bounded on $L^p(\R^+,r^{d-1}\dd r)$ for all $1<p<\infty$.
\end{prop}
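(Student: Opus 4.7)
\medskip

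\noindent\textbf{Proof proposal.} The plan is to recast the claim as an $L^p$ Fourier-multiplier estimate on the line and verify a one-dimensional Mihlin--H\"ormander condition for the symbol $m_{\nu,\mu}$. First, I would use the multiplicative change of variable $r=e^t$, which gives an isometry $f\mapsto F$, $F(t)\coloneq e^{dt/p}f(e^t)$, between $L^p(\R^+;r^{d-1}\dd r)$ and $L^p(\R;\dd t)$. Under this identification, the Mellin transform on the vertical line $\Re z=d/p$ becomes, up to sign conventions, the Fourier transform of $F$, so the relation $\mathcal M(T_{\nu,\mu} f)(z)=m_{\nu,\mu}(z)\mathcal M f(z)$ translates into the fact that $T_{\nu,\mu}$ is conjugate to the Fourier multiplier operator with symbol $\xi\mapsto m_{\nu,\mu}\!\left(\tfrac{d}{p}+i\xi\right)$ acting on $L^p(\R;\dd t)$.

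Next, I would verify that the symbol is well defined and smooth on the real line under the stated range for $p$. The numerator factor $\Gamma\!\left(\tfrac{z+\nu-n(d)}{2}\right)$ has poles precisely at $z=n(d)-\nu-2k$ ($k\ge 0$), and $\Gamma\!\left(1-\tfrac{z-\mu-n(d)}{2}\right)$ has poles at $z=n(d)+\mu+2+2k$ ($k\ge 0$); hence the hypothesis $n(d)-\nu<d/p<2+n(d)+\mu$ is exactly what keeps the line $\Re z=d/p$ away from all these poles, and $m_{\nu,\mu}$ is analytic in a horizontal strip around it. The boundedness of $\xi\mapsto m_{\nu,\mu}(d/p+i\xi)$ as $|\xi|\to\infty$ then follows from Stirling's formula $|\Gamma(a+ib)|\sim\sqrt{2\pi}\,|b|^{a-1/2}e^{-\pi|b|/2}$: the four Gamma factors each contribute $e^{-\pi|\xi|/4}$ in modulus, so the exponential factors cancel between numerator and denominator, and the sum of the polynomial exponents cancels too because both pairs of arguments have the same sum, namely $\tfrac{\mu+\nu+2}{2}$. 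This leaves a bounded limit at infinity.

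The remaining, and most delicate, step is the derivative bound of Mihlin type, $|\xi\,\partial_\xi m_{\nu,\mu}(d/p+i\xi)|\lesssim 1$. I would differentiate logarithmically,
\[
\partial_z\log m_{\nu,\mu}(z)=\tfrac12\bigl[\psi(A)-\psi(D)\bigr]+\tfrac12\bigl[\psi(C)-\psi(B)\bigr],
\]
where $A,B,C,D$ are the four arguments of the Gammas; since $A-D=C-B=(\nu-\mu)/2$ is a \emph{constant} shift, the asymptotic $\psi(w)=\log w+O(|w|^{-1})$ for $|w|\to\infty$ in a sector avoiding the negative real axis gives $\psi(A)-\psi(D)=O(|\xi|^{-1})$ and similarly for the other difference. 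Combined with the boundedness of $m_{\nu,\mu}$, this yields $\partial_\xi m_{\nu,\mu}(d/p+i\xi)=O(|\xi|^{-1})$ for $|\xi|\ge 1$, and the symbol is smooth and bounded near $\xi=0$ since no pole lies on the contour. Applying the one-dimensional Mihlin multiplier theorem on $L^p(\R;\dd t)$ for $1<p<\infty$ and transferring back via the change of variables delivers the boundedness on $L^p(\R^+;r^{d-1}\dd r)$. The special case $\mu,\nu\ge n(d)$ follows at once since the strip $(n(d)-\nu,2+n(d)+\mu)$ then contains $(0,d)$, i.e., covers all $p\in(1,\infty)$. The main obstacle is bookkeeping the cancellations in the Stirling asymptotics cleanly, together with a uniform-in-$\xi$ control of the $\psi$-differences up to $\xi=0$, where one must instead use smoothness of the Gammas away from their poles.
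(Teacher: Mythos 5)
Your proposal is correct and follows essentially the same route as the paper: the paper reduces the claim to verifying analyticity, boundedness, and the derivative bound $|m_{\nu,\mu}'(z)|\lesssim |\Im z|^{-1}$ on the strip in order to invoke the Mellin multiplier theorem of McBride--Spratt, and these are exactly the conditions you check (via the same Stirling and digamma asymptotics, with the constancy of the shifts $A-D$ and $B-C$ doing the work). The only difference is that you make the transference to a Fourier multiplier on $L^p(\R;\dd t)$ and the application of Mihlin explicit, whereas the paper cites this step as a black box.
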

\begin{proof}
  The result follows by taking\footnote{This $\mu$ is not the same $\mu$ as in our proposition.} $\mu =-\frac{d}{p}$, $\alpha=n(d)-\nu$, and $\beta=2+n(d)+\mu$ in  \cite[Theorem 2.6]{MS} once we show:
  \begin{enumerate}[(i)]
    \item $m_{\nu, \mu}(z)$ is analytic in $\alpha<\Re z<\beta$,\label{Mellin multiplier condition-1}
    \item $m_{\nu, \mu}(z)$ is bounded in $\alpha<\Re z<\beta$, and \label{Mellin multiplier condition-2}
    \item for $\alpha<\Re z<\beta$,  $|m'_{\nu, \mu}(z)| \leq C_{\nu, \mu} |\Im z|^{-1}$ as $|\Im z|\to \infty$. \label{Mellin multiplier condition-3}
  \end{enumerate}
  \ref{Mellin multiplier condition-2} follows easily from \eqref{equ:behab}, and  \ref{Mellin multiplier condition-1} follows since $\Gamma(z)$ is analytic for $\Re z>0$, $1/\Gamma(z)$ is entire, and  $\Re(1-\frac{z-\mu-n(d)}{2})> 0$ and $\Re \Gamma(\frac{z+\nu-n(d)}{2}) >0$ when $n(d)-\nu<\Re z<2+\mu+n(d)$.

  As for \ref{Mellin multiplier condition-3}, by the formula
  $ f'(z) =(\ln f(z))'f(z)$, one can get
  \begin{align*}
    m_{\nu, \mu }'(z)=\frac{1}{2} m_{\nu, \mu}(z)n_{\nu, \mu}(z) ,
  \end{align*}
  where
  \begin{equation*}
    \begin{multlined}
      n_{\nu,\mu}(z)  =\psi\Big(\frac{z+\nu-n(d) }{2}\Big) -\psi\Big(1-\frac{z-\mu-n(d)}{2}\Big)-\psi\Big(\frac{z+\mu-n(d) }{2}\Big) \\
      \phantom{a} +\psi\big(1-\frac{z-\nu-n(d)}{2}\big),
    \end{multlined}
  \end{equation*}
  and $\psi=\Gamma'/\Gamma$  is the digamma function, which has the asymptotic expansion
  \begin{align*}
    \psi(z)=\ln z-\frac{1}{2z}+O\left(\frac{1}{z^2}\right), \quad \text{as}\quad |z| \to \infty,
  \end{align*}
  This gives the following asymptotic for \(n_{\nu,\mu}\):
  \begin{align*}
    n_{\nu, \mu}(z)
     & =\ln\left(1+\frac{(\mu-\nu)(\mu+\nu+2)}{(z-n(d)+\mu)(z-n(d)-\mu-2)}\right)           \\
     & \quad +
    \frac{2\nu+2}{(z-n(d)+\nu )(z-n(d)-\nu -2)}-\frac{2\mu+2}{(z-n(d)+\mu )(z-n(d)-\mu -2)} \\
     & \quad +O\left(\frac{1}{|z|^2}\right).
  \end{align*}
  Thus,
  \(|n_{\nu, \mu}(z)|\leq C_{\nu, \mu}/|z| \) as \( |z| \to \infty.
  \)
  Using this with the boundedness of $m_{\nu, \mu}(z)$, we see that \ref{Mellin multiplier condition-3} is satisfied, and the proof is completed.
\end{proof}

\subsection{Singular integral operators on the half-line}
%

Following the proof of \cite[Lemma 2.11]{MSZ}, we obtain

\begin{lemma}\label{lem:CZbd} Let $\mu\geq0$ and $\nu\geq0$. For $r\in \mathbb R^{+}$, define
  \begin{align*}
    Tf(r) & =\int_\frac{r}{2}^r  \left(\frac{s}{r}\right)^{\mu+1+\frac{d}{2}-\frac{d}{p}}\frac{ f(s)}{1-(\frac{s}{r})^2} \frac{\dd s}{s}
    -\int_r^{2r}  \left(\frac{r}{s}\right)^{\frac{d}{p}-\frac{d}{2}+1+\nu}\frac{f(s)}{1-(\frac{r}{s})^2}  \frac{\dd s}{s},
  \end{align*}
  then there exists a constant $C=C(\mu,\nu)>0$ such that
  \begin{equation}\label{equ:TbdLp}
    \|Tf\|_{L^p(\mathbb R^{+}, \frac{\dd r}{r})}\leq C\|f\|_{L^p(\mathbb R^{+}, \frac{\dd r}{r})},\quad \text{ for all }\;1<p<\infty.
  \end{equation}

\end{lemma}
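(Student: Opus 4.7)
The plan is to recognize $T$ as (essentially) a convolution operator on the multiplicative group $(\R^+, \dd r/r)$ and reduce matters to a truncated Hilbert-type singular integral on $\R$ via the exponential change of variable, so that classical Calder\'on--Zygmund theory applies.

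First, in both defining integrals I substitute $s = rt$. A short algebraic manipulation gives
\begin{equation*}
  Tf(r) = \int_{1/2}^{2} k(t)\, f(rt)\, \frac{\dd t}{t},
\end{equation*}
with
\begin{equation*}
  k(t) = \begin{cases} \dfrac{t^{\mu+1+d/2-d/p}}{1-t^2}, & t\in (1/2,1),\\[1mm]  \dfrac{t^{1+d/2-d/p-\nu}}{1-t^2}, & t\in (1,2),\end{cases}
\end{equation*}
interpreted in the principal-value sense at $t=1$. Each branch of $k$ has a simple pole at $t=1$; since the leading singular behaviour is $\tfrac{1}{2(1-t)}$ on both sides, the two logarithmic divergences cancel symmetrically, so $Tf(r)$ is a bona fide principal value for $f$ in a dense class (say $C^\infty_c(\R^+)$).

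Second, I pass to $(\R,\dd x)$ via the exponential substitution $t = e^{-v}$, $r=e^x$. This converts $Tf$ into a standard convolution $(\tilde K \ast \tilde f)(x)$, where $\tilde f(x) = f(e^x)$ satisfies $\|\tilde f\|_{L^p(\R)} = \|f\|_{L^p(\R^+, \dd r/r)}$, and
\begin{equation*}
  \tilde K(v) = k(e^{-v}),\qquad \operatorname{supp} \tilde K \subset [-\log 2, \log 2].
\end{equation*}
A direct Taylor expansion yields $\tilde K(v) = \tfrac{1}{2v} + O(1)$ as $v\to 0$, so I decompose $\tilde K(v) = \tfrac{1}{2v}\mathbf{1}_{\{|v|\le \log 2\}} + h(v)$ with $h$ bounded and compactly supported. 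Convolution with $h\in L^1(\R)$ is bounded on $L^p(\R)$ for every $1<p<\infty$ by Young's inequality, while the first summand is a truncated Hilbert transform, bounded on $L^p(\R)$ for $1<p<\infty$ by classical Calder\'on--Zygmund theory; translating back through the exponential change of variable yields \eqref{equ:TbdLp}.

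The main obstacle is rigorously justifying the principal-value cancellation at $t=1$ uniformly in $r$, and verifying the standard size and regularity bounds $|\tilde K(v)|\lesssim 1/|v|$, $|\tilde K'(v)|\lesssim 1/|v|^2$ so that $\tilde K$ qualifies as a Calder\'on--Zygmund kernel on $\R$. Both facts reduce to elementary estimates on the explicit rational-exponential expressions for $k$, noting that the exponents $\mu+1+d/2-d/p$ and $1+d/2-d/p-\nu$ are bounded in absolute value, and the residue of $k$ at $t=1$ is the same constant $-\tfrac12$ from both sides independently of $\mu,\nu,p$. Once these verifications are carried out, as in the blueprint of \cite[Lemma 2.11]{MSZ}, the estimate \eqref{equ:TbdLp} follows.
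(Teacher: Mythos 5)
Your proof is correct and follows essentially the route the paper intends: the paper gives no argument of its own but defers to \cite[Lemma 2.11]{MSZ}, whose proof is exactly this reduction — substitute $s=rt$ to exhibit $T$ as a convolution on the multiplicative group, pass to $\R$ via $r=e^x$, $t=e^{-v}$, and split the kernel into the truncated Hilbert kernel $\tfrac{1}{2v}$ (the common residue $-\tfrac12$ at $t=1$ giving the principal-value cancellation) plus a bounded compactly supported remainder handled by Young's inequality. Your verification of the expansion $\tilde K(v)=\tfrac{1}{2v}+O(1)$ and the measure-preserving change of variables $\|\tilde f\|_{L^p(\R)}=\|f\|_{L^p(\R^+,\frac{\dd r}{r})}$ is exactly what is needed, and the constant's dependence on $\mu,\nu$ through the exponents is permitted by the statement.
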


\section{Proof of Main Theorem}
To prove Theorem \ref{Main result}, note that it suffices to show that for any $p\in(1,\infty)$ satisfying $\frac{n(d)-\tilde{\nu}_1}{d}\leq \frac{1}{p}\leq \frac{1}{2}$, one has
\begin{equation}\label{equ:redu1wpgeq2}
  \|Wf\|_{L^{p}(\R^d)}\leq C\|f\|_{L^p(\R^d)}.
\end{equation}
This is because $W$ and $W^*$ can be treated in the same way: if we exchange the roles of $\mu_{\alpha}$  with $\nu_{\alpha}$, and $e_{\alpha}$ with $\phi_{\alpha}$, an entirely analogous argument applied to $W^*$ shows that $W^*$ is also $L^p(\mathbb R^d)$-bounded for any $p\in(1,\infty)$ satisfying $\frac{n(d)-\tilde{\mu}_1}{d}\leq \frac{1}{p}\leq \frac{1}{2}$. Then Theorem \ref{Main result} follows immediately by duality. So in the remainder of the paper, we will only consider \(p\) in this range.

Since $L_{\A,a}$ is a proper perturbation of $L_{\A,0}$, when $j\geq \ell$,
\begin{gather}
  |\tilde{\mu}_{jk}-\tilde{\nu}_{jk}|=\frac{|\tilde{\mu}_{jk}^2-\tilde{\nu}_{jk}^2|}{\tilde{\mu}_{jk}+\tilde{\nu}_{jk}} \leq \frac{C}{j } \quad \text{and}\quad
  \phi_{jk}(\theta)=e_{jk}(\theta)+e_{jk}(\theta) R_{jk}(\theta),\label{equ:phiejk1}
  \shortintertext{with}
  \sup_{1\leq k\leq m_j} \| R_{jk}(\theta)\|_{L^\infty(\mathbb S^{d-1})} <\frac{C}{j },\nonumber
\end{gather}
where $\tilde{\mu}_{\smash{jk}}^{2}=\sqrt{\mu_{jk}^{2}+n(d)^2}$, $\tilde{\nu}_{\smash{jk}}^{2}=\sqrt{\nu_{jk}^{2}+n(d)^2}$.\

Without loss of generality, assume that $\ell$ is so large that when $j \geq \ell$, we have
\begin{equation}\label{equ:munudef}
  \tilde{\mu}_{jk}, \tilde{\nu}_{jk} \geq \frac{d-2}{2}+C j, \ \text{for all} \ 1\leq k\leq m_j,
\end{equation} where $C$ is a small constant.

Using \eqref{equ:phiejk1}, we split $\|Wf\|_{L^p(\R^d)}$ into three terms:
\begin{align}\nonumber
  \|Wf\|_{L^{p}(\R^{d})} & =\bigg\lVert\sum_{\alpha\in\II\cup\IL} W_{\alpha}^r (f_{\alpha}(\cdot)\big)(r)W_{\alpha}^\theta\big(e_{\alpha}(\theta))\bigg\rVert_{L^p} \\     
   & \lesssim \sum_{i=1}^{\ell_0}\left\| \hankel_{\tilde{\nu}_i}\hankel_{\tilde{\mu}_i}f_{i}(r)\phi_{i}(\theta)\right\|_{L^p}
  +\bigg\lVert\sum_{j,k} \hankel_{\tilde{\nu}_{jk}}\hankel_{\tilde{\mu}_{jk}}f_{jk}(r) e_{jk}(\theta) \bigg\rVert_{L^p}
  \\
   & \quad +\bigg\lVert\sum_{j,k} \hankel_{\tilde{\nu}_{jk}}\hankel_{\tilde{\mu}_{jk}}f_{jk}(r)R_{jk}(\theta) e_{jk}(\theta) \bigg\rVert_{L^p}\eqqcolon I_1 + I_2 + I_3,
  \label{defn:I1+I2+I3}
\end{align}
where  $\tilde{\mu}_i=\sqrt{\mu_i^2+n(d)^{2}}$ and  $\tilde{\nu}_i=\sqrt{\nu_i^2+n(d)^{2}}$.
\

We will next show that $I_1$, $I_2$, and $I_3$ are each bounded by $\|f\|_{L^p}$.
\subsubsection*{The estimate of \(I_1\)}  Let $\max\{0, \frac{n(d)-\tilde{\nu}_1}{d}\}\leq \frac{1}{p}\leq \frac{1}{2} $.  For each fixed $\alpha\in \II$, it follows from \cite[Lemma 2.1]{FGK15} that
\begin{equation}\label{equ:ejklinfest}
  \|\phi_{\alpha}(\theta)\|_{L^\infty(\mathbb{S}^{d-1})}\lesssim  |\nu_\alpha|^d.
\end{equation}
This implies that
\begin{align}\label{equ:jsmtermell-1}
  \left\| W_{\alpha}^rf_{\alpha}(r)\phi_{\alpha}(\theta)\right\|_{L^p(\R^d)} & \lesssim_\alpha \left\|W_{\alpha}^rf_{\alpha}(r)\right\|_{L^p(r^{d-1}\dd r)} \\\nonumber
   & \lesssim_\alpha \|f_{\alpha}(r)\|_{L^p(r^{d-1}\dd r)},
\end{align}
where the last inequality follows from Proposition \ref{boundedness of Mellin multiplier}, since $\tilde\nu_{\alpha} \geq \tilde\nu_1 \geq 0$.
By  H\"older's inequality,
we get for $p\geq2$
\begin{align}\nonumber
  \|f_{\alpha}(r)\|_{L^{p}(r^{d-1}\dd r)}
   & \lesssim
  \bigg\|
  \bigg(
  \sum_{\alpha}|f_{\alpha}(r)|^{2}
  \bigg)^{1/2}
  \bigg\|_{L^{p}(r^{d-1}\dd r)} \\
   & =
  \bigg\|
  \sum_{\alpha}f_{\alpha}(r)e_{\alpha}(\theta)
  \bigg\rVert_{L^p(r^{d-1} \dd r ; L^{2}(d\theta))}
  \notag                        \\
   & \lesssim
  \bigg\|
  \sum_{\alpha}f_{\alpha}(r)e_{\alpha}(\theta)
  \bigg\|_{L^{p}(r^{d-1}\dd rd\theta)}
  =\|f\|_{L^{p}(\R^{d})}.
  \label{subcoefficient-1}
\end{align}
Hence,
\begin{equation}\label{equ:Wf1estok}
  I_1\lesssim \sum_{i=1}^{\ell _0}C_i \|f_i\|_{L^{p}(\R^{d})}\lesssim_\ell \|f\|_{L^{p}(\R^{d})}.
\end{equation}
\subsubsection*{The estimate of \(I_3\)}  Since $L_{\A,a}$ is a proper perturbation of $L_{\A,0}$, by the assumption that $\{e_{\alpha}\}_{\alpha}\in\DMB$, we immediately get that $I_3\lesssim  I_2.$

\subsubsection*{The estimate of \(I_2\)} This is given by the following proposition.

\begin{proposition}\label{prop:keypropMSZ}
  Assume that the eigenbasis $\{e_\alpha\}_{\alpha\in \II\cup \IL} $ of $L_{\A ,0}$  belongs to  $\DMB$. Then for all $1<p<\infty$,
  \begin{align}\label{equ:MSZestkey}
    \!\!\!\!\!\!\!\! I_2 = \bigg\lVert\sum_{j = \ell}^{\infty}\sum_{k=1}^{m_j} \hankel_{\tilde{\nu}_{jk}}\hankel_{\tilde{\mu}_{jk}}f_{jk}(r) e_{jk}(\theta) \bigg\rVert_{L^p(\R^{d})}\lesssim\bigg\lVert\sum_{j =\ell}^\infty\sum_{k=1}^{m_j} f_{jk}(r)e_{jk}(\theta)\bigg\rVert_{L^p(\R^{d})}.
  \end{align}
\end{proposition}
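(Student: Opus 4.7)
The plan is to factor the composition $T_{jk}:=\hankel_{\tilde{\nu}_{jk}}\hankel_{\tilde{\mu}_{jk}}$ into operators independent of $(j,k)$ multiplied by scalar coefficients $c_{jk}^{(\ell)}$ to which the variable-coefficient discrete multiplier machinery of Lemma \ref{lem:Dismul} applies. Concretely I would seek a decomposition
\[
  T_{jk} = I + \sum_{\ell=1}^{L} c^{(\ell)}_{jk}\, T^{(\ell)} + R_{jk},
\]
where each $T^{(\ell)}$ is a fixed $L^p(\R^+,r^{d-1}\dd r)$-bounded operator (of the type appearing in Lemma \ref{lem:CZbd} or a smooth Mellin multiplier), and $R_{jk}$ has an integrable smooth kernel with a coefficient that is $o(1)$ uniformly in $k$. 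Inserting this into $I_2$ yields three contributions. The identity term produces exactly the right-hand side of \eqref{equ:MSZestkey}. For each $\ell$, the piece $\sum_{j,k} c^{(\ell)}_{jk}\, T^{(\ell)} f_{jk}(r)\, e_{jk}(\theta)$ is controlled by first applying Lemma \ref{lem:Dismul} to $\{c^{(\ell)}_{jk}\}$ (using $\{e_{\alpha}\}\in\DMB$) at each fixed $r$, and then pulling the now $(j,k)$-independent radial operator $T^{(\ell)}$ out of the angular sum by Fubini on $\R^d=\R^+\times\S^{d-1}$ together with its $L^p$-boundedness.

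To build the decomposition on the Mellin side, I would start from the explicit symbol $m_{\tilde\nu_{jk},\tilde\mu_{jk}}(z)$ of Section \ref{Sec:Prelim} and expand around the diagonal $\tilde\nu_{jk}=\tilde\mu_{jk}$, using that $\varepsilon_{jk}:=\tilde{\nu}_{jk}-\tilde{\mu}_{jk}=O(1/j)$ by \eqref{equ:phiejk1}. Since $m_{\mu,\mu}(z)=1$, one obtains a formal expansion
\[
  m_{\tilde{\nu}_{jk},\tilde{\mu}_{jk}}(z)-1 = \sum_{\ell\geq 1} \varepsilon_{jk}^{\ell}\, n^{(\ell)}(\tilde{\mu}_{jk};z)
\]
with coefficients built from derivatives of $\log\Gamma$ evaluated at shifted arguments. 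Each symbol $n^{(\ell)}(\tilde\mu_{jk};\cdot)$ inverts on $\R^+$ to a kernel in $r/s$ that admits a further splitting into (i) a Cauchy-type singular part at $r=s$ fitting the framework of Lemma \ref{lem:CZbd} after factoring out an explicit prefactor $c^{(\ell)}_{jk}$ (a smooth function of $\tilde{\mu}_{jk}$), and (ii) a smoother remainder controlled by direct kernel estimates. Truncating after finitely many terms and absorbing the tail into $R_{jk}$ yields the sought-for decomposition with every $T^{(\ell)}$ bounded uniformly in the ambient parameters.

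The principal technical obstacle is the verification of the discrete-multiplier bounds \eqref{uniform bound}--\eqref{uniform difference bound} for the scalar sequences $\{c^{(\ell)}_{jk}\}_{(j,k)\in\IL}$, namely that the averaged sequences $\overline{c}^{(\ell)}_{j}$ are uniformly bounded and that $\fd^N \overline{c}^{(\ell)}_{j}$ satisfies the dyadic summation bound of order $O(2^{-(N-1)L})$. The proper-perturbation hypothesis (Definition \ref{def:pp}) provides the pointwise estimates $\tilde\mu_{jk}\sim j$ and $|\varepsilon_{jk}|=O(1/j)$ together with the required smoothness of the cluster averages, so that taking $N$ forward differences in $j$ at the averaged level improves the decay by the expected power of $j^{-N}$ and the DMB scaling is matched. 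Once this estimate is verified for every term in the truncated Mellin expansion, combining it with the $\DMB$ property of $\{e_\alpha\}$ and the $L^p$-boundedness of each $T^{(\ell)}$ closes the proof of Proposition \ref{prop:keypropMSZ}.
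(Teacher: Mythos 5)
Your high-level strategy --- peel off the identity, reduce the corrections to scalar sequences $c^{(\ell)}_{jk}$ multiplying fixed $L^p$-bounded radial operators, then run Lemma \ref{lem:Dismul} on the coefficients and Lemma \ref{lem:CZbd} on the radial singularity --- is indeed the strategy of the paper, which implements it through the explicit Bessel-integral kernel $\tilde K_{jk}(r,s)$ and a three-region splitting $\{s<r/2\}$, $\{r/2<s<2r\}$, $\{s>2r\}$ rather than through a Mellin-symbol Taylor expansion. But as written, your decomposition $T_{jk}=I+\sum_{\ell}c^{(\ell)}_{jk}T^{(\ell)}+R_{jk}$ skips the step where all the work lies. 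The $(j,k)$-dependence of the kernel of $T_{jk}$ is not multiplicative: near the diagonal the singular part is $\big(\tfrac sr\big)^{\frac d2+1-\frac dp+\tilde\mu_{jk}}\tfrac{\sin(-\pi b_{jk})}{\pi}\tfrac{1}{1-(s/r)^2}$, so the cluster index enters through the \emph{exponent} $\tilde\mu_{jk}\sim j$, and no scalar prefactor turns this into a fixed operator. The paper's resolution is to freeze the exponent at $\tilde\mu_{\ell,1}$ (yielding a single operator covered by Lemma \ref{lem:CZbd}, with the genuinely scalar coefficient $\sin(\pi b_{jk})$ fed to the multiplier lemma) and to show that the difference kernel $\tfrac{1}{1-(s/r)^2}\big[(s/r)^{\tilde\mu_{jk}-\tilde\mu_{\ell,1}}-1\big]$ is no longer singular and satisfies the conditions \eqref{uniform bound}--\eqref{uniform difference bound} uniformly in $(r,s)$ with an integrable envelope. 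You need an analogue of this cancellation; it does not fall out of expanding $m_{\tilde\nu_{jk},\tilde\mu_{jk}}(z)$ around $\tilde\nu=\tilde\mu$, because the coefficients $n^{(\ell)}(\tilde\mu_{jk};z)$ still carry the full $\tilde\mu_{jk}$-dependence and grow like powers of $\ln|\Im z|$, so they are not uniformly admissible Mellin multipliers in the sense of Proposition \ref{boundedness of Mellin multiplier}.

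The second gap is the remainder. An integrable kernel $R_{jk}$ with a coefficient that is merely $o(1)$ in $j$ cannot be summed by the triangle inequality: there are $m_j\sim j^{d-2}$ terms per cluster and infinitely many clusters, so $\sum_{j,k}O(1/j)$ diverges. The paper never does this; every error piece --- the $\ln(1-(s/r)^2)$ term, the $E^{\pm}_{j,k,n}$ tails, and the off-diagonal regions where the kernel decays like $(s/r)^{cj}$ --- is shown to satisfy the averaged bound and the dyadic forward-difference bounds of Definition \ref{defn:dmb} pointwise in $(r,s)$, so that the angular sum is controlled by the $\DMB$ hypothesis first and Young's inequality is applied radially afterwards. (Triangle-inequality summation appears only in Section \ref{subsec:AinZ}, where the relevant remainder decays like $j^{-2}$ and $d=2$.) To close your argument you must verify $2^{(N-1)L}\sum_{j=2^L}^{2^{L+1}}\lvert\fd^N\overline{R}_j(r,s)\rvert\lesssim1$ for each remainder family, and you must also treat the regions $s<r/2$ and $s>2r$, which your diagonal expansion does not address at all.
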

\begin{proof}
  This is similar to the proof for the main theorem in \cite{MSZ} so we have put it in Appendix \ref{appendix-proof} for the interested reader.
\end{proof}
Using Proposition \ref{prop:keypropMSZ} and \eqref{subcoefficient-1}, we obtain \begin{align*}
  I_2 & \lesssim\bigg\lVert\sum_{j=\ell}^\infty \sum_{k=1}^{m_j} f_{jk}e_{jk}\bigg\rVert_{L^p}           \lesssim\bigg\lVert\sum_{\alpha\in \II \cup \IL}f_{\alpha}e_{\alpha}\bigg\rVert_{L^p}
  + \sum_{\alpha \in \II}\left\|f_{\alpha}e_{\alpha}\right\|_{L^p}
  \lesssim_\ell\|f\|_{L^p}.
\end{align*}
This completes the proof of Theorem \ref{Main result}. \hfill\(\square\)

\section{The proof of Theorem \ref{results in dimension two}}\label{sec: dimension two}
Assume that $a\in W^{1,\infty}(\mathbb{S}^{1},\mathbb{R})$, $\A \in W^{1,\infty}(\mathbb{S}^{1},\R^2)$ through this section. We will prove Theorem \ref{results in dimension two} using Theorem \ref{Main result}. To do this, we need to verify Assumption \ref{eigenvalue assumption}, show that $\{e_\alpha\}_{\alpha\in \II\cup\IL} \in \mathcal{DMB}$, and prove that $ L_{\A, a}$ is a proper perturbation of $ L_{\A, 0}$. We split the proof into the following cases:
\begin{enumerate}
  \item $\tilde A\notin \frac12\mathbb Z$,
  \item $\tilde A\in\mathbb Z$ and $a$ is symmetric across $\pi$, or

  \item $\tilde{A}\in\frac12\Z\setminus\Z$  and $a$ is symmetric across $\pi$.
\end{enumerate}
In the proof, we will need properties of the $T$-periodic eigenvalue problem.
\begin{definition}
  For each $a \in W^{1,\infty}(\mathbb S^1, \mathbb R)$ and $T\in(-1/2,1/2]$, the \emph{$T$-periodic eigenvalue problem} is the following ODE,
  \begin{equation}
    \begin{cases}
      -\psi''(\theta)+a(\theta)\psi(\theta)
      =\lambda \psi(\theta),\quad \theta\in[0,2\pi] \\
      \psi(2\pi)=e^{2i\pi T}\psi(0),
      \quad \psi'(2\pi)= e^{2\pi i  T}\psi'(0),
    \end{cases}\label{equ:t-periodic-ode}
  \end{equation}
  It admits an increasing sequence $\{\lambda_n(T,a)\}_{n\ge1}$ of eigenvalues $\lambda_n(T,a)\uparrow \infty$, and the following comparison principle from {\cite[Theorem 2.2.2]{Eastham}}:
\end{definition}
\begin{lemma}
  Let $a,b\in W^{1,\infty}$ and $T\in\mathbb R$. If $a \ge b$ pointwise, then for every $n\ge 1$, \[
    \lambda_n(T,a) \ge \lambda_n(T,b).
  \]

\end{lemma}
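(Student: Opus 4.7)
The plan is to deduce the comparison principle from the min-max (Courant--Fischer) characterization of eigenvalues of self-adjoint operators with compact resolvent, applied to the Schr\"odinger-type operator $H_a \coloneq -\partial_\theta^2 + a(\theta)$ on $L^2([0,2\pi])$ equipped with the $T$-twisted boundary conditions of \eqref{equ:t-periodic-ode}.

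First, I would introduce the closed, symmetric, bounded-below quadratic form
\[
  q_a(\psi) \coloneq \int_0^{2\pi}\bigl(|\psi'(\theta)|^2 + a(\theta)|\psi(\theta)|^2\bigr)\dd\theta,
\]
on the form domain
\[
  \mathcal D_T \coloneq \{\psi\in H^1([0,2\pi]) : \psi(2\pi)=e^{2\pi iT}\psi(0)\}.
\]
Since $a\in L^\infty$ is an infinitesimally form-bounded perturbation of $-\partial_\theta^2$, the form domain $\mathcal D_T$ depends only on $T$ and not on $a$. The form generates a self-adjoint operator which, by compactness of the embedding $\mathcal D_T\hookrightarrow L^2([0,2\pi])$, has purely discrete spectrum coinciding with $\{\lambda_n(T,a)\}_{n\geq 1}$ (counted with multiplicity).

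Then I would invoke the min-max principle to write, for each $n\ge 1$,
\[
  \lambda_n(T,a) = \min_{\substack{V\subset \mathcal D_T \\ \dim V = n}} \ \max_{\substack{\psi \in V \\ \|\psi\|_{L^2}=1}} q_a(\psi),
\]
and similarly for $b$. The pointwise inequality $a\geq b$ immediately gives $q_a(\psi)\geq q_b(\psi)$ for every $\psi\in\mathcal D_T$, and since the minimization is over the same collection of $n$-dimensional subspaces in both cases, the conclusion $\lambda_n(T,a)\geq \lambda_n(T,b)$ follows at once.

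There is essentially no serious obstacle: the only technical point is verifying that the twisted boundary condition produces a genuinely self-adjoint operator with form domain independent of $a$, which is classical Sturm--Liouville theory for the $T$-periodic problem. If a self-contained derivation is preferred to the reference \cite[Theorem 2.2.2]{Eastham}, one can alternatively use the unitary map $\psi\mapsto e^{-iT\theta}\psi$, which conjugates the $T$-twisted boundary condition to the standard $2\pi$-periodic one and leaves $q_a$ invariant up to a bounded perturbation of the coefficient, thereby reducing everything to the comparison principle for the ordinary periodic Sturm--Liouville problem.
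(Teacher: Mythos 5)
Your proof is correct; the paper itself offers no proof of this lemma, simply citing \cite[Theorem 2.2.2]{Eastham}, and your min--max argument on the common form domain $\mathcal D_T=\{\psi\in H^1: \psi(2\pi)=e^{2\pi iT}\psi(0)\}$ (with the derivative condition arising as the natural boundary condition) is precisely the standard variational proof underlying that reference. The only imprecision is in your closing alternative: the gauge map $\psi\mapsto e^{-iT\theta}\psi$ turns $-\partial_\theta^2$ into $-(\partial_\theta-iT)^2$, i.e.\ it introduces a first-order (magnetic-type) term rather than merely perturbing the potential coefficient, but since this kinetic part is the same for $a$ and $b$ the comparison argument is unaffected.
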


\subsection{{When  $\tilde A \notin \frac{1}{2}\mathbb Z$}}
Let $\bar A$ be the number in $(-1/2,1/2)\setminus \{0\}$ such that $\bar A - \tilde A \in \mathbb Z$.
We will only consider $\bar A \in (0,\frac12)$, as the case $\bar A<0$ can be treated analogously after adjusting notation (for example, we would need to exchange $\mu_{j1}^2$ and $\mu_{j2}^2$ below to match the order of eigenvalues in Assumption \ref{eigenvalue assumption}\ref{assump1}).
From \cite[Section 2.3]{Eastham}, we know that the free spherical operator $L_{\A ,0}=(-i\partial_\theta+A(\theta))^2$ with periodic boundary condition has eigenvalues
\begin{equation*}
  \mu_0^2=\bar A^2, \quad \mu_{jk}^2
  = (j +(-1)^{k}\bar A)^2,\  \text{for all} \ \ j\geq 1,  k=1, 2.
\end{equation*}
The corresponding cluster sizes are $m_0=1$ and $m_j=2$ for $j\geq 1$,
and the corresponding eigenfunctions are
\begin{align*}
  e_0(\theta)=\frac{1}{\sqrt{2\pi}}e^{i(\bar A\theta - \int_0^\theta A(\theta') \dd \theta')},\qquad
  e_{jk}(\theta)=\frac1{\sqrt{2\pi}}e^{ij (-1)^{k}\theta}e^{i(\bar A\theta - \int_0^\theta A(\theta') \dd \theta')}.
\end{align*}
Clearly, $\{e_{jk}(\theta)\} \in  \mathcal {DMB}$   since  $\{e_0(\theta), \frac1{\sqrt{2\pi}}e^{ij \theta}, \frac1{\sqrt{2\pi}}e^{-ij \theta}\}_{j \geq 1}\in \mathcal {DMB}$.

Let $\{\lambda_n(\A,a)\}_{n\ge1}$ be the eigenvalues of $L_{\A, a}$ arranged as an increasing sequence diverging to infinity. From the proof of \cite[Lemma 2.1]{FFFParxiv}, we know that  there exist $n^*,\ell\in\N$ such that $\lambda_n(\A,a)$ can be written as
\[\{\lambda_n(\mathbf{ A}, a):\;n\in\N,\;n>n^*\}=\{\nu_{jk}^2:\;j\geq \ell, k=1,2\},\] with
\begin{equation}\label{equ:lamjbeha}
  \nu_{jk}^2=\tilde a+\mu_{jk}^2+O\Big(\frac{1}{j^2}\Big),\  \text{as} \ j\to \infty.
\end{equation}
By taking $n^*$ and $\ell$ sufficiently large, we may assume for later use that
\begin{equation}\label{equ:lamjbeha2}
  \lvert \nu_{jk}^2-\tilde a-\mu_{jk}^2\rvert < 1, \text{ for all } j\ge \ell.\end{equation}
It follows from \eqref{equ:lamjbeha} that $L_{\A,0}$ and $L_{\A, a}$ satisfy Assumption
\ref{eigenvalue assumption} \eqref{assump1} with any choice of $C_{\A} \in (0, \frac12 - \bar A)$.

Next, we will show that $L_{\A,0}$ and $L_{\A, a}$ satisfy Assumption
\ref{eigenvalue assumption} \eqref{assump2}.
Starting from the eigenvalue equation,
\begin{equation}\label{equ:LAaeigp}
  \begin{cases}
    L_{\A,a}\phi_n(\theta)=\lambda_n(\A,a)\phi_n(\theta),\quad \theta\in[0,2\pi] \\
    \phi_n(0)=\phi_n(2\pi),\;\phi_n'(0)=\phi_n'(2\pi),
  \end{cases}
\end{equation}
we can use the gauge transformation
\begin{equation}\label{equ:gaugepsi}
  \psi(\theta) = e^{-i\int_0^\theta A(\theta')\dd \theta'}\phi(\theta),
\end{equation}
to obtain the $\bar{A}$-periodic eigenvalue problem \eqref{equ:t-periodic-ode}, with $\lambda_n(\A,a) = \lambda_n(\bar A,a)$.

For $j$ sufficiently large, the eigenvalues $\lambda_{2j}(\bar{A}, a)$ and $\lambda_{2j+1}(\bar{A}, a)$ are equal to $\nu^2_{j'k'}$ for some $j',k'$, and the goal now is to show that we can identify $j'$ and $k'$ for $\lambda_{2\ell }(\bar{A}, a)$ and $\lambda_{2\ell+1}(\bar{A}, a)$.
Notice that $(\spn_{j\geq \ell}\{e_{jk}\})^\perp$ is spanned by the $2\ell-1$ vectors $\{e_0\}\cup\{e_{jk}:\substack{j=1,2,\dots,\ell,\\ k=1,2}\}$. So we need to show that $n^*=2\ell-1$.

Let $a_*=-\|a\|_{L^\infty}-1$ and $a^*=\|a\|_{L^\infty}+1$.
By the comparison principle, we have that for all $n\geq 0$,
\begin{equation}\label{equ:prilamt}
  \lambda_{n}(\bar{A}, a_*) <\lambda_{n}(\bar{A}, a)<\lambda_{n}(\bar{A}, a^*).
\end{equation}
Since $\lambda_{2j}(\bar A,0) = \mu_{j1}^2$ and $\lambda_{2j+1}(\bar A,0)=\mu_{j2}^2$,
we have
\begin{gather*}
  \mu_{j1}^2+a_*<\lambda_{2j}(\bar{A}, a)<\mu_{j1}^2+a^*, \mathrlap{\text{ and}} \\
  \mu_{j2}^2+a_*<\lambda_{2j+1}(\bar{A}, a)<\mu_{j2}^2+a^*.
\end{gather*}

Define for $j\geq\ell$ the intervals $I_j=[\mu_{j1}^2+a_*, \mu_{j2}^2+a^*]$, which contain $\lambda_{2j}(\bar A,a)$ and $\lambda_{2j+1}(\bar A,a)$. By taking $\ell\gg1$ if necessary, the $I_j$ are pairwise disjoint. Note that by \eqref{equ:lamjbeha2}, $I_j$ contains precisely two eigenvalues
\[
  \{\nu^2_{j'k'}: j'\ge \ell,\ k'=1,2 \}\cap I_j = \{  \nu_{j1}^2,\ \nu_{j 2}^2\}.
\]
Hence, $\nu_{j1}^2=\lambda_{2j}$ and $\nu_{j2}^2=\lambda_{2j+1}$ for $j\geq \ell$. In particular, for $j=\ell$, we have $n^*=2\ell -1$.
This proves that $L_{\A,0}$ and $L_{\A, a}$ satisfy Assumption
\ref{eigenvalue assumption} \eqref{assump2} with $\ell_0=2\ell-1$.

Now we check the proper perturbation property. By \cite[Lemma 2.1]{FFFParxiv}, the corresponding normalized eigenfunctions $\phi_{jk}$  can be written as \begin{equation}\label{equ:phijbeh}
  \phi_{jk}(\theta)=e_{jk}(\theta)\big(1+R_{jk}(\theta)\big)
\end{equation}
with
\begin{equation}\label{equ:Cjthetadef}
  R_{jk}(\theta) = e^{(-1)^{k} i( \sqrt{\nu_{\smash{jk}}^2 - \tilde a} - \mu_{jk} )\theta} e^{i \int_0^\theta W_{jk}(\theta') \dd \theta' } - 1
\end{equation}
where $W_{jk}$ satisfies
\begin{equation}\label{equ:Wlambdajthe}
  \big\|W_{jk}\big\|_{C^0(\mathbb{S}^{1})}\leq\frac{C}{\sqrt{\nu_{\smash{jk}}^2-\tilde{a}}}\leq \frac{C}{j },\quad\text{as}\quad j \to\infty.
\end{equation}
A simple calculation gives

\[
  R'_{jk} = (-1)^{k}i(R_{jk}(\theta) + 1 ) \big(\sqrt{\nu_{\smash{jk}}^2 - \tilde a} - \mu_{jk} + W_{jk}(\theta)\big).
\]
By \eqref{equ:lamjbeha} and \eqref{equ:Wlambdajthe}, we get for any $\theta\in[0,2\pi]$ and $k=1, 2$
\begin{equation}\label{equ:cjlargest}
  |R_{jk}(\theta)|\leq \frac{C}{j },\quad  |R_{jk}'(\theta)|\leq \frac{C}{j },\quad\text{as}\quad j \to\infty.
\end{equation}
Hence,
\begin{gather}\label{equ:cjthetBCcond}
  \sup_{\substack{j \geq\ell\\ \theta\in [0,2\pi]}}
  \lvert R_{j1}(\theta)\rvert
  +\lvert R_{j2}(\theta)\rvert
  +\lvert R_{j1}'(\theta)\rvert
  +\lvert R_{j2}'(\theta)\rvert
  \leq \frac{C}{\ell}, \\
  \shortintertext{and}
  \sup_{\substack{j \geq\ell\\ \theta\in [0,2\pi]}}
  \sum_{j =2^L}^{2^{L+1}}
  \Big(
  \lvert \fd R_{j1}'(\theta)\rvert
  +\lvert \fd R_{j2}'(\theta)\rvert
  \Big)\leq \frac
  {C}{\ell}.
\end{gather}
In conclusion,  $L_{\A, a}$ is a proper perturbation of $L_{\A, 0}$.

The conditions for Theorem \ref{Main result} have been verified, and we deduce that  the wave operator $W$ and $W^*$ are bounded on $L^p(\mathbb R^2)$ for $1<p<\infty$ due to $\mu_1,\nu_1\geq0.$ This proves Theorem \ref{results in dimension two} when $\tilde{A}\not\in\frac12\Z.$

\subsection{{When $\tilde{A}\in\Z$ and $a$ is symmetric across $\pi$}}\label{subsec:AinZ} In this case, the eigenvalues and eigenfunctions of $L_{\A,0}$ are given by
\begin{align*}
  \label{equ:defmujkejk}
  \mu_0^2    & =0,    & e_0(\theta)    & =\frac{1}{\sqrt{2\pi}},                                        \\
  \mu_{j1}^2 & =j^2,  & e_{j1}(\theta) & =\frac{1}{\sqrt{\pi}}e^{-i\int_0^{\theta}A(\theta')\dd\theta'}
  \sin j\theta,                                                                                         \\
  \mu_{j2}^2 & = j^2, & e_{j2}(\theta) & =\frac{1}{\sqrt{\pi}}e^{-i\int_0^{\theta}A(\theta')\dd\theta'}
  \cos j\theta.
\end{align*}
For the perturbed operator $L_{\A,a}\varphi=(-i\partial_\theta+A(\theta))^2\varphi+a(\theta)\varphi$,  by \cite[ Lemma B.7]{FFFParxiv}, there exists $\ell\in\N$ such that $\{\lambda_n:\; n \in\N,\;n \geq n^*\}=\{\nu_{jk}^2: j \geq\ell,\;k=1,2\}$ and
\begin{equation}\label{equ:lambdjdefA}
  \nu_{jk}^2=\mu_{jk}^2+\tilde{a}+O\Big(\frac1{j }\Big),\quad\text{as}\quad j \to\infty.
\end{equation}
Using the comparison principle as before, one can prove that $n^*=2\ell-1$, so  $L_{\A, 0}$ and
$L_{\A, a}$ satisfy the Assumption \ref{eigenvalue assumption}.

Under the symmetry assumption, $a$ is given by a cosine series.  Let $a_{\myc, k}$ denote the $k^\text{th}$ cosine-Fourier coefficient of $a(x)$ and write
\begin{align}\label{equ:Cj1theta}
  R_{jk,\mys}(\theta)= & \frac{1}{2}\sum_{m\neq j}\frac{a_{\myc,|j-m|}+(-1)^k a_{\myc,j+m}}{(j-m)(j+m)}\sin((m-j)\theta), \\
  R_{jk,\myc}(\theta)= & \frac{1}{2}\sum_{m\neq j}\frac{a_{\myc,|j-m|}+(-1)^k a_{c,j+m}}{(j-m)(j+m)}\cos((m-j)\theta).
\end{align}
Direct computation gives
\begin{equation}\label{equ:Cj120}
  \sup_{\theta}\{|R_{jk,\mys}(\theta)|, |R_{jk,\myc}(\theta)|, |R_{jk,\mys}'(\theta)|, |R_{jk,\myc}'(\theta)|\}\lesssim \frac{1}{j}.
\end{equation}
Then the corresponding eigenfunctions $\phi_{jk}$
can be written as
\begin{align}\label{equ:varphiejrjA}
  \phi_{j1}(\theta)=e_{j1}(\theta)\big(1+R_{j1, \myc}(\theta)+R_{j1}^2(\theta)\big)+ e_{j2}(\theta) R_{j1, \mys}(\theta), \\
  \phi_{j2}(\theta)=e_{j2}(\theta)\big(1+R_{j2, \myc}(\theta)+R_{j2}^2(\theta)\big)+ e_{j1}(\theta) R_{j2, \mys}(\theta),
\end{align}
where
\begin{align}\label{equ:Rj2estA}
  \|R_{jk}^2\|_{L^\infty}= O\Big(\frac{1}{j^2}\Big),\quad\text{as}\;j \to\infty.
\end{align}
Strictly speaking, the remainder term does not satisfy Assumption \eqref{eigenfunction property} in Definition \ref{def:pp}, as we do not know the estimate for the derivatives of $R_{jk}^2$. That is, $L_{\A,a }$ is a proper perturbation of $L_{\A, 0}$ with a negligible term.

However, this term has enough decay for us to use the much weaker triangle inequality instead of the discrete multiplier theorem to get the $L^p$-boundedness. Here are the details: we only need to handle the term induced by $R_{jk}^2$. For this term, we have
\begin{align*}
   & \bigg\|\sum_{\substack{j\ge \ell                                              \\ k=1,2}} W_{jk}^rf_{jk}(r)R_{jk}^2(\theta) e_{jk}(\theta) \bigg\|_{L^p(\R^{2})}
  \lesssim \sum_{j,k} \frac1{j^2} \left\|W_{jk}^rf_{jk}(r) \right\|_{L^p(r\;dr)}      \\
   & \lesssim \sup_{j\geq\ell,k} \left\|W_{jk}^rf_{jk}(r) \right\|_{L^p(r\;dr)}
  \lesssim \bigg\|\sum_{j,k} W_{jk}^rf_{jk}(r)e_{jk}(\theta) \bigg\|_{L^p(\R^{2})} \\
   & \lesssim \bigg\|\sum_{j,k} f_{jk}(r)e_{jk}(\theta) \bigg\|_{L^p(\R^{2})}
  \lesssim \|f\|_{L^p(\R^2)}.
\end{align*}
Therefore, Theorem \ref{results in dimension two} is proved when $\tilde{A}\in\Z$ and $a(\theta)$ is symmetric across $\theta=\pi$.

\subsection{{When $\tilde{A}\in\frac12\Z\setminus\Z$ and $a$ is symmetric across $\pi$}}

In this case, the operator $L_{\A, 0}$ has the following eigenbasis
\begin{gather*}
  \mu_{j1}^2=\mu_{j2}^2 =\Big(j+\frac{1}{2}\Big)^2,
  \ j\geq 0                                              \\
  e_{j1}
  =\frac{1}{\sqrt{2\pi}}
  e^{i\int_0^\theta A(s)\dd s}
  \sin\Big(\Big(j+\frac{1}{2}\Big) \theta\Big),
  \quad e_{j2}=\frac{1}{\sqrt{2\pi}}e^{i\int_0^\theta A(s)\dd s} \cos\Big(\Big(j+\frac{1}{2}\Big) \theta\Big).
\end{gather*}
To state the properties of the eigenfunctions for the perturbed operator, we write
\begin{align*}
  \hat a(\theta)=\begin{cases}
                   4a(2\theta),      & \text{if}\  \theta \in [0, \pi],   \\
                   4a(2\theta-2\pi), & \text{if}\ \theta \in [\pi, 2\pi].
                 \end{cases}
\end{align*}
Let $\hat a_{\myc, k}$ denote the $k^\text{th}$ cosine-Fourier coefficient of $\hat a(x)$ and write
\begin{align*}
  \hat R_{jk,\mys}(\theta)= & \frac{1}{2}\sum_{m\neq 2j+1}\frac{\hat a_{\myc,|2j+1-m|}+(-1)^k \hat a_{\myc,2j+1+m}}{(2j+1-m)(2j+1+m)}\sin((m-2j-1)\theta/2), \\
  \hat R_{jk,\myc}(\theta)= & \frac{1}{2}\sum_{m\neq 2j+1}\frac{\hat a_{\myc,|2j+1-m|}+(-1)^k \hat a_{\myc,2j+1+m}}{(2j+1-m)(2j+1+m)}\cos((m-2j-1)\theta/2).
\end{align*}
Similarly as before,  the following estimates are valid,
\begin{equation}\label{equ:Cj120-again}
  \sup_{\theta}\{|\hat R_{jk,\mys}(\theta)|, |\hat R_{jk,\mys}(\theta)| |\hat R_{jk,\mys}'(\theta)|, |\hat R_{jk,\mys}'(\theta)|\}\lesssim \frac{1}{j}.
\end{equation}

For the perturbed operator $L_{\A,a}=(-i\partial_\theta+A(\theta))^2+a(\theta)$,  from the proof of \cite[Lemma B.10]{FFFParxiv}, and  \cite[Lemma B.7]{FFFParxiv}, we deduce that there exists $\ell\in\N$ such that $\{\lambda_n:n\in\N,\;n>2\ell\}=\{\nu_{j, k}^2:j\in\Z,\;j \geq\ell,\; k=1,2\}$  and
\begin{equation}\label{equ:lambdjdefAa}
  \nu_{j, k}^2=\tilde{a}+\Big(j+\frac12\Big)^2+O\Big(\frac1{j }\Big),\quad\text{as}\quad j \to\infty.
\end{equation}
The corresponding eigenfunctions $\phi_{j, k}$ satisfy
\begin{align*}
  \phi_{j1}(\theta)=e_{j1}(\theta)\big(1+\hat R_{j1, c}^1(\theta)+\hat R_{j1}^2(\theta)\big)+ e_{j2}(\theta) \hat R_{j1, s}(\theta), \\
  \phi_{j2}(\theta)=e_{j2}(\theta)\big(1+\hat R_{j2, c}^1(\theta)+\hat R_{j2}^2(\theta)\big)+ e_{j1}(\theta)\hat R_{j2, s}(\theta),
\end{align*}
where
\begin{align*}
  \|\hat{R}_{jk}^2\|_{L^\infty}=  O\Big(\frac{1}{j^2}\Big),\quad\text{as}\;j \to\infty.
\end{align*}
As before, we have that $W$ and $W^*$ are bounded in $L^p$. This concludes the final case and the proof of Theorem \ref{results in dimension two} is complete.\hfill $\square$

\appendix

\section{Proof of Proposition \ref{prop:keypropMSZ}}
\label{appendix-proof}

For any function $f(x)$ with the expansion $$f(r, \theta)=\sum_{j,k} f_{jk}(r)e_{jk}(\theta).$$
Define the operator $W_\ell$ by
\begin{equation}\label{equ:Welloper}
  W_\ell f(r,\theta)=\sum_{j = \ell}^{\infty}\sum_{k=1}^{m_j}\hankel_{\tilde{\nu}_{jk}}\hankel_{\tilde{\mu}_{jk}}f_{jk}(r) e_{jk}(\theta).
\end{equation}
Then, we rewrite $W_\ell$ as
\begin{align*}
  W_\ell f(r,\theta) & =\sum_{j,k}\int_0^\infty f_{jk}(s)\left( \int_0^\infty J_{\tilde{\nu}_{jk}} (r\lambda)J_{\tilde{\mu}_{jk}} (s\lambda ) \lambda^{d-1}\dd {\lambda}\right) s\dd s  e_{jk}(\theta) \\
                     & =\sum_{j,k} \int_0^\infty f_{jk}(s) K_{jk}(r,s)\frac{ds}{s}e_{jk}(\theta),
\end{align*}
with the kernel
$$K_{jk}(r,s)=s^d \int_0^\infty J_{\tilde{\nu}_{jk}} (r\lambda)J_{\tilde{\mu}_{jk}} (s\lambda ) \lambda^{d-1}\dd {\lambda}.$$
Hence,  \eqref{equ:MSZestkey} is equivalent to
\begin{equation}\label{equ:MSZellcases}
  \|W_\ell f\|_{L^p(\R^d)}\leq C\|f\|_{L^p(\R^d)},
\end{equation}
which can also be written as
\begin{equation}\label{equ:MSZellcases123}
  \|r^{\frac{d}{p}} W_\ell f\|_{L^p(\frac{\dd r}{r} \cdot \dd \theta)}\leq C\|r^{\frac{d}{p}} f\|_{L^p(\frac{\dd r}{r} \cdot \dd \theta)}.
\end{equation}
So we define the modified kernels
\begin{equation}\label{equ:modkerKj}
  \tilde{K}_{jk}(r,s)=\left(\frac{s}{r}\right)^{-\frac{d}{p}}K_{jk}(r,s)=\left(\frac{s}{r}\right)^{-\frac{d}{p}}s^d \int_0^\infty J_{\tilde{\nu}_{jk}} (r\lambda)J_{\tilde{\mu}_{jk}} (s\lambda ) \lambda^{d-1}\dd {\lambda},
\end{equation}
and the corresponding modified wave operator
\begin{equation}\label{equ:modwavdef}
  \tilde{W}_\ell f(r,\theta)=\sum_{j =\ell}^{\infty}\sum_{k=1}^{m_j}\int_0^\infty \tilde{K}_{jk}(r,s)f_{jk}(s)\frac{\dd s}{s}e_{jk}(\theta).
\end{equation}
Then,  \eqref{equ:MSZellcases123} can be reduced to prove
\begin{equation}\label{equ:furedmodw}
  \big\|\tilde{W}_\ell f(r,\theta)\big\|_{L^p(\frac{\dd r}{r} \cdot \dd \theta)}\leq C\|f(r, \theta)\|_{L^p(\frac{\dd r}{r} \cdot \dd \theta)}.
\end{equation}
Let
\begin{align}
  a_{jk} & =\frac{\tilde{\mu}_{jk}+\tilde{\nu}_{jk}}{2},\; b_{jk}=\frac{\tilde{\mu}_{jk}-\tilde{\nu}_{jk}}{2}.
  \label{equ:mujnujdef}
\end{align}
Since $L_{\A,0}$ and $L_{\A,a}$ satisfies the Assumption \ref{eigenvalue assumption}, and $\mathcal L_{\A, a}$ is a proper perturbation of $\mathcal L_{\A, 0}$, one has
\begin{align}\label{equ:ajbjderest}
  |a_{jk}|\simeq j ,\quad | b_{jk}|\leq \frac1{j }, \quad a_{jk} b_{jk}=O(1), \ \text{for}\ j \geq\ell.
\end{align}

By \cite[(4.9)]{MSZ}, we get the
explicit formula for the kernel functions
$\tilde K_{jk}(r,  s)$  as follows:
\begin{align}\nonumber
  \tilde K_{jk}(r,  s)= &\frac{s^{\frac{d}{2}+1-\frac{d}{p}}}{r^{\frac{d}{2}-1-\frac{d}{p}}}\int_0^\infty  \lambda J_{\tilde{\mu}_{jk}}(s\lambda) J_{\tilde{\nu}_{jk}}(r \lambda) \dd {\lambda} \\\label{equ:repKkrs}
  &=\begin{cases}
    2\left(\frac{s}{r}\right)^{\frac{d}{2}+1-\frac{d}{p}+\tilde{\mu}_{jk}} \frac{\sin(-\pi b_{jk})}{\pi}  \sum_{n=0}^\infty A_{j,k, n}^{+}\left(\frac{s}{r}\right)^{2n},   & 0<s<r; \\
    2\left(\frac{r}{s}\right)^{{\frac{d}{p}-\frac{d}{2}+1+\tilde{\nu}_{jk}}} \frac{\sin(\pi b_{jk})}{\pi}   \sum_{n=0}^\infty A_{j,k, n}^{-}\left(\frac{r}{s}\right)^{2n}, & 0<r<s,
  \end{cases}
\end{align}
with
\begin{align*}
  A_{j,k, n}^{+} & =\frac{\Gamma(a_{jk}+n+1)\Gamma(b_{jk}+n+1)}{\Gamma(a_{jk}+b_{jk}+n+1) \Gamma(n+1)},    \\
  A_{j,k, n}^{-} & = \frac{\Gamma(a_{jk}+n+1)\Gamma(1- b_{jk}+n)}{\Gamma(a_{jk}- b_{jk}+n+1) \Gamma(n+1)}.
\end{align*}

The asymptotic behavior of $\tilde K_{jk}(r,s)$ suggests that we should split $\tilde W_\ell$ into the three parts as follows:
\begin{align*}
  \tilde W_\ell f(r, \theta) & =\int_{0}^{\infty} \sum_{j,k}  \tilde{K}_j(r,s) f_{jk}(s) \frac{\dd s}{s}e_{jk}(\theta)                                                                                                      \\
                             & =\int_{0}^{\frac{r}{2}} \sum_{j,k}  \tilde{K}_j(r,s) f_{jk}(s)  \frac{\dd s}{s}e_{jk}(\theta)+\int_{\frac{r}{2}}^{2r} \sum_{j,k}  \tilde{K}_j(r,s) f_{jk}(s)   \frac{\dd s}{s}e_{jk}(\theta) \\
                             & \quad +\int_{2r}^{\infty} \sum_{j,k}  \tilde{K}_j(r,s) f_{jk}(s)  \frac{\dd s}{s}e_{jk}(\theta)                                                                                              \\
                             & =:\tilde W_{\ell,1} f(r, \theta)+\tilde W_{\ell,2} f(r, \theta)+\tilde W_{\ell,3} f(r, \theta).
\end{align*}
Then, showing \eqref{equ:furedmodw} is reduced to proving that
\begin{align}\label{equ:wellk123}
  \| \tilde W_{\ell,k} f(r, \theta)\|_{L^p(\frac{\dd r}{r} \cdot \dd \theta)} \lesssim \| f(r, \theta)\|_{L^p(\frac{\dd r}{r} \cdot \dd \theta)}, \ \text{for}\  k= 1, 2, 3.
\end{align}
We define $\tilde K_{j,k, 1}(r,s)$, $ \tilde K_{j,k, 2}(r,s)$ and $\tilde K_{j,k, 3}(r,s)$ by
\begin{align*}
  \tilde K_{j,k, 1}(r,s) & = \tilde K_{jk}(r,s) \chi_{\{0<s<r/2\}},  \\
  \tilde K_{j,k, 2}(r,s) & = \tilde K_{jk}(r,s) \chi_{\{r/2<s<2r\}}, \\
  \tilde K_{j,k, 3}(r,s) & = \tilde K_{jk}(r,s) \chi_{\{2r<s\}}.
\end{align*}


\subsection*{{Case 1: $0<\frac{s}{r}<\frac12$}.} Recall
$$\tilde K_{j,k, 1}(r,s)=2\left(\frac{s}{r}\right)^{\frac{d}{2}+1-\frac{d}{p}+\tilde{\mu}_{jk}} \frac{\sin(-\pi b_{jk})}{\pi}  \sum_{n=0}^\infty A_{j,k, n}^{+}\left(\frac{s}{r}\right)^{2n}\chi_{\{0<s<r/2\}}$$
and
$$A_{j,k, n}^{+}=\frac{\Gamma(a_{jk}+n+1)\Gamma(b_{jk}+n+1)}{\Gamma(a_{jk}+b_{jk}+n+1) \Gamma(n+1)}.$$
By \eqref{equ:ajbjderest} , and using Lemma \ref{lem:behab} with $a=a_{jk}\simeq j $ and $b= b_{jk},$ we deduce that
\begin{equation}\label{equ:Ajn+est}
  \big|A_{j,k, n}^{+}\big|\leq Cj .
\end{equation}
And so for $1\leq k \leq m_j$, by \eqref{equ:munudef}, one has
\begin{align}\label{Case-1 bd}
  |\tilde K_{j,k, 1}(r,s)|\lesssim  \left(\frac{s}{r}\right)^{\frac{d}{2}+1-\frac{d}{p}+\tilde{\mu}_{jk}} j
  \lesssim  \left(\frac{s}{r}\right)^{d -\frac{d}{p}} \frac{j }{2^{|Cj|}}\lesssim  \left(\frac{s}{r}\right)^{d -\frac{d}{p}},
\end{align}
where the above constant is independent of $j$. Hence, for the average $\overline{\tilde K}_{j, 1}(r,s)$ of $\tilde K_{j,k, 1}(r,s)$, one has
\begin{align*}
  \sup_{j}|\overline{\tilde K}_{j, 1}(r,s)| \lesssim \left(\frac{s}{r}\right)^{d -\frac{d}{p}}.
\end{align*}
On the other hand,
\begin{align}\label{Case-1 fd}
  \sup _{L\in\N:\;2^L\geq \ell} 2^{(N-1)L}  \sum_{j =2^L}^{2 ^{L+1}}\left|\fd^N\overline{\tilde K}_{j, 1}(r,s)\right|
   & \lesssim  \sum_{j =\ell}^{\infty} j ^{N-1}\sup_{1\leq k \leq m_j}| \tilde K_{j,k, 1}(r,s)| \nonumber                                        \\
   & \lesssim \sum_{j =\ell}^{\infty} j ^N \sup_{1\leq k \leq m_j}\left(\frac{s}{r}\right)^{\frac{d}{2}+1-\frac{d}{p}+\tilde{\mu}_{jk}}\nonumber \\
   & \lesssim \left(\frac{s}{r}\right)^{d-\frac{d}{p}}.
\end{align}
This inequality together with
\eqref{Case-1 bd} shows  that $\tilde K_{j,k, 1}(r,s)$ satisfies the conditions \eqref{uniform bound} and \eqref{uniform difference bound}. Since $\{e_{jk}(\theta)\}_j \in \DMB$, then for $1<p<\infty$, we have
\begin{align*}
  \left \|\sum_{j,k}\tilde K_{j,k, 1}(r,s) f_{jk}(s) e_{jk}(\theta) \right \|_{L^{p}(\dd \theta)}  \lesssim   \left(\frac{s}{r}\right)^{d-\frac{d}{p}}\left \|\sum_{j,k} f_{jk}(s) e_{jk}(\theta)\right \|_{L^{p}(\dd \theta)}.
\end{align*}
Furthermore, notice $r ^{d-\frac{d}{p}}\chi_{\{0<r<1/2\}} \in L^1(\frac{\dd r}{r})$ for all $1<p<\infty$. Combining this fact and log-Young's inequality, we have
\begin{align}\label{W 1}
  \|\tilde W_{\ell,1} f(r, \theta )\|_{ L^p (\frac{\dd r}{r} \cdot \dd \theta)}
   & \lesssim  \left\|\int_0^{r/2}  \left(\frac{s}{r}^{}\right)^{d-\frac{d}{p}} \left \|\sum_{j,k} f_{jk}(s) e_{jk}(\theta)\right \|_{L^{p}(\dd \theta)} \frac{\dd s}{s} \right \|_{L^p (\frac{\dd r}{r})}\nonumber \\
   & \lesssim \left \|\sum_{j,k}  f_{jk}(s)e_{jk}(\theta)\right \|_{L^p(\frac{\dd r}{r} \cdot \dd \theta)} =\| f(r,\theta)\|_{L^{p}(\frac{\dd r}{r} \cdot \dd \theta)}.
\end{align}

\subsection*{Case 2: $\frac{s}{r}>2$.}
Recall
$$\tilde K_{j,k, 3}(r,s)=2\left(\frac{r}{s}\right)^{{\frac{d}{p}-\frac{d}{2}+1+\tilde{\nu}_{jk}}} \frac{\sin(\pi b_{jk})}{\pi}   \sum_{n=0}^\infty A_{j,k, n}^{-}\left(\frac{r}{s}\right)^{2n}\chi_{\{0<r<s/2\}}$$
and
$$A_{j,k, n}^{-}= \frac{\Gamma(a_{jk}+n+1)\Gamma(1- b_{jk}+n)}{\Gamma(a_{jk}- b_{jk}+n+1) \Gamma(n+1)}.$$
By \eqref{equ:ajbjderest}, and using Lemma \ref{lem:behab} with $a=a_{jk}\simeq j $ and $b=- b_{jk},$ we obtain
$$\big|A_{j,k, n}^{-}\big|\leq Cj .$$
And so
\begin{align}\label{case-3-bd}
  |\tilde K_{j,k, 3}( r, s)|\lesssim \left(\frac{r}{s}\right)^{\frac{d}{p}+Cj} j  \lesssim \left(\frac{r}{s}\right)^{\frac{d}{p}}\frac{j }{2^{Cj }} \lesssim \left(\frac{r}{s}\right)^{\frac{d}{p}},
\end{align}
and
\begin{align}
  \sup _{L\in\N: 2^L\geq\ell} 2^{(N-1)L}\sum_{j=2^L}^{2 ^{L+1}}\left|\fd^N \overline{\tilde K}_{j, 3}(r,s)\right|
   & \lesssim   \sum_{j =\ell}^{\infty} j^{N-1}\sup_{1\leq k \leq m_j} | \tilde K_{j,k, 3}(r,s)|
  \nonumber                                                                                                                                                 \\
   & \lesssim  \sum_{j =\ell}^{\infty} j^N \left(\frac{r}{s}\right)^{\frac{d}{p}+Cj}  \lesssim  \left(\frac{r}{s}\right)^{\frac{d}{p}}.  	\label{case-3-fd}
\end{align}
From \eqref{case-3-bd} and \eqref{case-3-fd}, we know that
$\tilde K_{j,k, 3}(r,s)$ satisfies the condition \eqref{uniform bound} and \eqref{uniform difference bound}, which implies the following bound is valid:
\begin{align*}
  \left \|\sum_{j,k} \tilde K_{j,k, 3}(r,s) f_{jk}(s) e_{jk}(\theta) \right \|_{L^{p}(\dd \theta)}  \lesssim   \left(\frac{r}{s}\right)^{\frac{d}{p}}\left \|\sum_{j,k} f_{jk}(s) e_{jk}(\theta) \right \|_{L^{p}(\dd \theta)}.
\end{align*}

Noting that    $r^{\frac{d}{p}}\chi_{\{0<r<1/2\}}\in L^1(\frac{\dd r}{r})$ for $1<p<\infty$, we get
\begin{align}\label{W 3}
  \lVert \tilde W_{\ell,3} f(r, \theta )\rVert_{ L^p (\frac{\dd r}{r}\cdot \dd \theta)}
  & \lesssim
  \Bigg\lVert\int_{2r}^{\infty}
  \left(\frac{r}{s}\right)^{\frac{d}{p}}
  \bigg\lVert
  \sum_{j,k}
  f_{jk}(s) e_{jk}(\theta)
  \bigg \rVert_{L^{p}(\dd \theta)}
  \frac{\dd s}{s}
  \Bigg \rVert_{L^p (\frac{\dd r}{r})}
  \nonumber                                                                                                                                                         \\
   & \lesssim \bigg \|\sum_{j,k} f_{jk}(s)e_{jk}(\theta) \bigg \|_{L^p (\frac{\dd r}{r}\cdot \dd \theta)}=\| f(r,\theta)\|_{L^p (\frac{\dd r}{r}\cdot \dd \theta)}.
\end{align}
Hence, ${\tilde W}_{\ell,3}$ is bounded in  $ L^p (\frac{\dd r}{r} \cdot \dd \theta)$. And so, \eqref{equ:wellk123} with $k=3$ follows.

For the rest part of this section, we remain to show that ${\tilde W}_{\ell, 2}$ is bounded in  $ L^p (\frac{\dd r}{r} \cdot \dd \theta)$.

\subsection*{Case 3: $\frac{1}{2}<\frac{s}{r}<2$}
Now we deal with the term $\tilde W_{\ell,2} f(r, \theta )$. Here we use similar strategy as in \cite{MSZ} and we define an approximate kernel $\tilde k^{\text{ap}}(r,s)$ with the property that the index $k$ and variables $r, s $ are separated, and that they  also have the same singularity as $\tilde K_{jk}(r,s)$. For the approximating operator, we use the oscillatory in the radial part first.  After that, we use the multiplier theorem for the eigenfunctions to get the boundedness for the angular part. For the error term, we use the oscillatory in the angular part first, then using Young's inequality to control the radial part.

In the remainder of this section, we use the shorthand notation $\chi_{+}=\chi_{\{r/2<s<r\}}$ and $\chi_{-}=\chi_{\{r<s<2r\}}$. Sometimes we omit $\chi_{+}$ and $\chi_{-}$ when we have already restricted the range of $r,s$ appropriately.

Now we are in position to prove the following inequality
\begin{align}\label{equ:W2frtheest}
  \| \tilde W_{\ell,2} f(r, \theta)\|_{L^p(\frac{\dd r}{r} \cdot \dd \theta)} \lesssim   \| f(r, \theta)\|_{L^p(\frac{\dd r}{r} \cdot \dd \theta)}.
\end{align}

Using Stirling's formula, we have for any fixed $j$,  as $n$ goes to $\infty$,
\begin{align}
  A_{j,k, n}^{+} & =1-\frac{a_{jk} b_{jk}}{n+1}+O_j\left(\frac{1}{(n+1)^2}\right),\label{expansion for $A_{k,n}^{+}$}   \\
  A_{j,k, n}^{-} & =1+\frac{a_{jk} b_{jk}}{(n+1)}+O_j\left(\frac{1}{(n+1)^2}\right).\label{expansion for $A_{k,n}^{-}$}
\end{align}
This inspired us to  rewrite $\tilde K_{j,k, 2}$ as
\begin{align}\nonumber
  \tilde K_{j,k, 2} & = 2\left(\frac{s}{r}\right)^{\frac{d}{2}+1-\frac{d}{p}+\tilde{\mu}_{jk}} \frac{\sin(-\pi b_{jk})}{\pi} \left[\frac{1}{1-(\frac{s}{r})^2}-a_{jk} b_{jk}\left(\frac{s}{r}\right)^{-2} \ln\!\Big({1-\Big(\frac{s}{r}\Big)^2}\Big) +E_{jk}^{+}\right]\chi_+
  \nonumber                                                                                                                                                                                                                                                                   \\\nonumber
   & \quad +2\left(\frac{r}{s}\right)^{{\frac{d}{p}-\frac{d}{2}+1+\tilde{\nu}_{jk}}} \frac{\sin(\pi b_{jk})}{\pi} \left[\frac{1}{1-(\frac{r}{s})^2} +a_{jk} b_{jk}\left(\frac{r}{s}\right)^{-2} \ln\!\Big({1-\Big(\frac{r}{s}\Big)^2}\Big) +E_{jk}^{-} \right]\chi_-
  \\\label{equ:kjtilde2term}
  & =: \tilde K_{j,k, 2}^1 (r,s)+\tilde K_{j,k, 2}^2 (r,s)+\tilde K_{j,k, 2}^3 (r,s)
\end{align}
with
\begin{align*}
  \tilde K_{j,k, 2}^1(r,s)  & =2\left(\frac{s}{r}\right)^{\frac{d}{2}+1-\frac{d}{p}+\tilde{\mu}_{jk}} \frac{\sin(-\pi b_{jk})}{\pi}\frac{1}{1-\left(\frac{s}{r}\right)^2}\chi_+                              \\
                            & \quad +2\left(\frac{r}{s}\right)^{{\frac{d}{p}-\frac{d}{2}+1+\tilde{\nu}_{jk}}} \frac{\sin(\pi b_{jk})}{\pi}\frac{1}{1-(\frac{r}{s})^2}\chi_-;                                 \\
  \tilde K_{j,k, 2}^2 (r,s) & =-2a_{jk} b_{jk} \left(\frac{s}{r}\right)^{\frac{d}{2}-1-\frac{d}{p}+\tilde{\mu}_{jk}} \frac{\sin(- \pi  b_{jk})}{\pi}\ln\!\Big({1-\Big(\frac{s}{r}\Big)^2}\Big) \chi_+        \\
                            & \quad +2a_{jk} b_{jk} \left(\frac{r}{s}\right)^{{\frac{d}{p}-\frac{d}{2}-1+\tilde{\nu}_{jk}}} \frac{\sin (\pi  b_{jk})}{\pi}\ln\!\Big({1-\Big(\frac{r}{s}\Big)^2}\Big) \chi_-; \\
  \tilde K_{j, k,2}^3 (r,s) & =2 \left(\frac{s}{r}\right)^{\!\frac{d}{2}+1-\frac{d}{p}+\tilde{\mu}_{jk}} \frac{\sin(- \pi  b_{jk})}{\pi} E_{jk}^+(r,s)                                                       \\
                            & \quad +2 \left(\frac{r}{s}\right)^{\!{\frac{d}{p}-\frac{d}{2}+1+\tilde{\nu}_{jk}}} \frac{\sin (\pi  b_{jk})}{\pi}E_{jk}^-(r,s).
\end{align*}
We denote the corresponding operators $\tilde W_{\ell,2}^1, \tilde W_{\ell,2}^2$ and $\tilde W_{\ell,2}^3$ by
\begin{align}
  \tilde W_{\ell,2}^{i} f(r, \theta)=\int_0^\infty \sum_{j,k}\tilde K_{j,k, 2}^i(r,s)  f_{jk}(s)e_{jk}(\theta)\frac{\dd s}{s}, \qquad i=1, 2, 3.\notag
\end{align}
\vspace{0.2in}
\noindent\emph{\textbf{Estimate for the first term $\tilde W_{\ell,2}^{1}$.}}

Step 1: \emph{Estimate for the approximate kernels: }Let us define the approximate kernels by
\begin{align*}
  \tilde K_{j,k, 2}^{1, \text{ap}}(r,s) & =  2 \left(\frac{s}{r}\right)^{\frac{d}{2}+1-\frac{d}{p}+\tilde{\mu}_{\ell, 1}} \frac{\sin (-\pi  b_{jk})}{\pi}\frac{1}{1-\left(\frac{s}{r}\right)^2}\chi_+
  \\ &\quad + 2 \left(\frac{r}{s}\right)^{\frac{d}{p}-\frac{d}{2}+1+\tilde{\nu}_{\ell, 1}} \frac{\sin \pi  b_{jk}}{\pi}\frac{1}{1-(\frac{r}{s})^2}\chi_-,
\end{align*}
and  the corresponding operator $\tilde W_{\ell,2}^{1, \text{ap}}f (r, \theta)$   by
\begin{align*}
  \tilde W_{\ell,2}^{1, \text{ap}}f(r,\theta)\  & \coloneq \sum_{j,k} \int_{0}^\infty \tilde K_{j,k, 2}^{1, \text{ap}}(r,s) f_{jk}(s)\frac{\dd s}{s}e_{jk}(\theta)                                                                                                \\
                                                & =2\int_\frac{r}{2}^r  \left(\frac{s}{r}\right)^{\frac{d}{2}+1-\frac{d}{p}+\tilde{\mu}_{\ell, 1}}\frac{1}{1-(\frac{s}{r})^2}  \sum_{j,k} \frac{\sin (-\pi  b_{jk})}{\pi}  f_{jk}(s)e_{jk}(\theta)\frac{\dd s}{s} \\
                                                & \quad +2\int_{r}^{2r}  \left(\frac{r}{s}\right)^{\frac{d}{p}-\frac{d}{2}+1+\tilde{\nu}_{\ell, 1}}\frac{1}{1-(\frac{r}{s})^2}  \sum_{j,k}\frac{\sin \pi  b_{jk}}{\pi} f_{jk}(s)e_{jk}(\theta)\frac{\dd s}{s}.
\end{align*}
Using  Lemma \ref{lem:CZbd}, we have
\begin{align*}
  \|\tilde W_{\ell,2}^{1, \text{ap}}f(r,\theta)\|_{L^p(\frac{dr}{r})}\lesssim  \bigg\|\sum_{j,k} \sin \pi  b_{jk} f_{jk}(r)e_{jk}(\theta)\bigg\|_{L^p(\frac{dr}{r})}.
\end{align*}
We can easily check that the sequence $\{\sin \pi  b_{jk}\}$ satisfies the condition   \eqref{uniform bound} and \eqref{uniform difference bound}. Hence
\begin{align}\label{equ:wellappro}
  \|\tilde W_{\ell,2}^{1, \text{ap}}f(r,\theta)\|_{L^p(\frac{dr}{r}\cdot d\theta)}\lesssim  \bigg\|\sum_{j,k} f_{jk}(r)e_{jk}(\theta)\bigg\|_{L^p(\frac{dr}{r}\cdot d\theta)}.
\end{align}

Step 2:\noindent\emph{ Estimate for the error term}. Define the kernels of the error terms
\begin{align*}
  \tilde K_{j,k, 2}^{1, \text{err}}(r,s) & =\tilde K_{j,k, 2}^{1}(r,s)-\tilde K_{j,k, 2}^{1, \text{ap}}(r,s)                                                                                                                                                                         \\
                                         & =2\left(\frac{s}{r}\right)^{\frac{d}{2}+1-\frac{d}{p}+\tilde{\mu}_{\ell, 1}} \frac{\sin(-\pi b_{jk})}{\pi}\frac{1}{1-\left(\frac{s}{r}\right)^2}\left[\left(\frac{s}{r}\right)^{{\tilde{\mu}_{jk}-\tilde{\mu}_{\ell, 1}}}-1\right] \chi_+ \\
                                         & \quad +2\left(\frac{r}{s}\right)^{\frac{d}{p}-\frac{d}{2}+1+\tilde{\nu}_{\ell, 1}} \frac{\sin(\pi b_{jk})}{\pi}\frac{1}{1-(\frac{r}{s})^2}\left[\left(\frac{r}{s}\right)^{{\tilde{\nu}_{jk}-\tilde{\nu}_{\ell, 1}}}-1\right] \chi_-,
\end{align*}
and the associated operator
\begin{align*}
  \tilde W_{\ell,2}^{1, \text{err}} f(r, \theta)=\int_0^\infty \sum_{j,k}\tilde K_{j,k, 2}^{1, \text{err}}(r,s)  f_{jk}(s)e_{jk} (\theta)\frac{\dd s}{s}.
\end{align*}
By the same argument as in \cite[Lemma 4.3, Lemma 4.5]{MSZ},  we have
\begin{align*}
  | \overline{\tilde K}_{j 2}^{1, \text{err}}(r,s)|\lesssim 1, \quad \sup_{L\in\N:2^L\geq\ell} 2^{(N-1)L} \sum_{j=2^L}^{2^{L+1}}|\fd ^N \overline{\tilde K}_{j, 2}^{1, \text{err}}(r,s)|\lesssim 1.
\end{align*}

Hence
\begin{align*}
  \| \tilde W_{\ell,2}^{1, \text{err}} f(r, \theta) \|_{L^p(\dd \theta)} \leq \int_{r/2}^r  \Big\|\sum_{j,k}f_{jk}(s)e_{jk}(\theta)\Big\|_{L^p(\dd \theta)}\frac{\dd s}{s}.
\end{align*}
Furthermore, we obtain
\begin{align}\label{equ:lpwellerr2}
  \| \tilde W_{\ell,2}^{1, \text{err}} f(r, \theta)\|_{L^p(\frac{dr}{r}\cdot \dd\theta)}  \leq \|f(r, \theta)\|_{L^p(\frac{dr}{r}\cdot \dd\theta)}.
\end{align}
Combining this with \eqref{equ:wellappro}, we get the $L^p$-boundedness of $\tilde W_{\ell,2}^1$.

\vspace{0.2in}
{{\noindent\emph\textbf{Estimate for the second term $\tilde W_{\ell,2}^{2} $.}} Direct computation gives
\begin{align*}
  | \overline{\tilde K_{j, 2}} (r,s)|                                                  & \lesssim \left|\ln\!\Big({1-\Big(\frac{s}{r}\Big)^2}\Big)\right|\chi_++\left|\ln\!\Big({1-\Big(\frac{r}{s}\Big)^2}\Big)\right|\chi_-, \\
  \sup_L  2^{(N-1)L}   \sum_{j=2^L}^{2^{L+1}}|\fd^N \overline{\tilde K_{j, 2}^2}(r,s)| & \lesssim
  \left|\ln\!\Big({1-\Big(\frac{s}{r}\Big)^2}\Big)\right|\chi_++\left|\ln\!\Big({1-\Big(\frac{r}{s}\Big)^2}\Big)\right|\chi_-.
\end{align*}
Since $\{e_\alpha\}_{\alpha\in \II \cup \IL} \in \DMB$, we have
\begin{align*}
  \|\tilde W_{\ell,2}^{2}  f(r, \theta) \|_{L^p(\dd \theta)}
   & \leq \int_{r/2}^r
  \left|
  \ln\!\Big({1-\Big(\frac{s}{r}\Big)^2}\Big)
  \right|
  \bigg\lVert
  \sum_{j,k}
  f_{jk}(s)
  e_{jk}(\theta)
  \bigg\rVert_{L^p(\dd \theta)}
  \frac{\dd s}{s}                                                                                                                                                           \\
   & \quad +\int_r^{2r}\left|\ln\!\Big({1-\Big(\frac{r}{s}\Big)^2}\Big)\right|\bigg\lVert \sum_{j,k} f_{jk}(s)e_{jk}(\theta)  \bigg\rVert_{L^p(\dd \theta)}\frac{\dd s}{s}.
\end{align*}
Hence, by Young's inequality, we get
\begin{align}\label{equ:lpwellerr21}
  \|\tilde W_{\ell,2}^{2}  f(r, \theta) \|_{L^p(\frac{\dd r}{r} \cdot \dd{\theta})} \lesssim  \|f(r, \theta)\|_{L^p(\frac{\dd r}{r} \cdot \dd{\theta})}.
\end{align}

\vspace{0.2in}

\noindent\emph\textbf{Estimate for the third term $\tilde W_{\ell,2}^{3}$.} Recall that $\tilde K_{j,k, 2}^3$ is given by
\begin{align*}
  \tilde K_{j,k, 2}^3 (r,s)
   & = 2 \left(\frac{s}{r}\right)^{\!\frac{d}{2}+1-\frac{d}{p}+\tilde{\mu}_{jk}}\frac{\sin (-\pi  b_{jk})}{\pi}\sum_{n=0}^{\infty} E_{j,k, n}^{+}\left(\frac{s}{r}\right)^{2n}\chi_+  \\
   & \quad +2 \left(\frac{r}{s}\right)^{\frac{d}{p}-\frac{d}{2}+1+\tilde{\nu}_{jk}} \frac{\sin \pi  b_{jk}}{\pi}\sum_{n=0}^{\infty}E_{j,k,n}^{-}\left(\frac{r}{s}\right)^{2n} \chi_-,\end{align*}
where
\begin{align} \label{A equal to E}
  E_{j,k, n}^{+}= A_{j,k, n}^{+}-1+\frac{a_{jk} b_{jk}}{n+1},\  1\leq k \leq m_j ; \\
  E_{j,k, n}^{-}=  A_{j,k, n}^{-}-1-\frac{a_{jk} b_{jk}}{n+1},\    1\leq k \leq m_j.
\end{align}
Follow the same argument as in \cite[Lemma 4.6]{MSZ}, we can prove that
\begin{align}
  \bigg| \frac{\dd^N E_{j,k, n}^{+}}{\dd j^N}\bigg| \lesssim \frac{1}{j^N} \frac{1}{n+1}, \label{derivative for A plus} \\
  \bigg|\frac{\dd^N E_{j,k, n}^{-}}{\dd j^N}\bigg| \lesssim \frac{1}{j^N} \frac{1}{n+1}  \label{derivative for A minus}.
\end{align}
These will lead to that $\tilde K_{j, 2}^3 (r,s)$ will satisfies the conditions \eqref{uniform bound} and \eqref{uniform difference bound} with an upper bound $ |\ln\left(1-(\frac{s}{r})^2\right)|$ for fixed $r, s$, hence

\begin{align*}
  \|\tilde W_{\ell,2}^{3} f(r, \theta) \|_{L^p(\dd \theta)}
   & \leq
  \int_{r/2}^r
  \Big|\ln\!\Big({1-\Big(\frac{s}{r}\Big)^2}\Big)\Big|
  \Big\|\sum_{j,k}f_{jk}(s)e_{jk}(\theta)\Big\|_{L^p(\dd \theta)}\frac{\dd s}{s} \\
   & +\int_r^{2r}\Big|\ln\!\Big({1-\Big(\frac{r}{s}\Big)^2}\Big)\Big|
  \Big\|\sum_{j,k}f_{jk}(s)e_{jk}(\theta)\Big\|_{L^p(\dd \theta)}\frac{\dd s}{s}.
\end{align*}
Log-Young's inequality again gives
\begin{align}\label{W 2 third term}
  \| \tilde W_{\ell,2}^{3} f(r, \theta) \|_{L^p(\frac{\dd r}{r} \cdot \dd{\theta})} \lesssim  \|f(r, \theta)\|_{L^p(\frac{\dd r}{r} \cdot \dd{\theta})}.
\end{align}
Combining \eqref{equ:lpwellerr2}, \eqref{equ:lpwellerr21} and \eqref{W 2 third term}, we have proved that
\begin{align}\label{W 2}
  \| \tilde W_{\ell,2} f(r,\theta)\|_{L^p(\frac{\dd r}{r} \cdot \dd{\theta})} \lesssim  \|f(r, \theta)\|_{L^p(\frac{\dd r}{r} \cdot \dd{\theta})}.
\end{align}
This inequality together with
\eqref{W 1} and \eqref{W 3} implies that $\tilde W$ is bounded in \mbox{$L^p(\frac{\dd r}{r} \cdot \dd{\theta})$} for $1<p<\infty$. Hence,  we conclude the proof Proposition \ref{prop:keypropMSZ}.

\section*{Acknowledgements}
L. Fanelli and Y. Wang are partially supported by the Basque Government through the BERC 2022--2025 program and by the Spanish Agencia Estatal de Investigaci\'on through BCAM Severo Ochoa excellence accreditation CEX2021-001142-S/MCIN/AEI 10.13039/501100011033, and by the Research Project PID2021-123034NB-I00 funded by MCIN/AEI/10.13039/501100011033.

L. Fanelli is also supported by the project IT1615-22 funded by the Basque Government.

X. Su's postdoctoral position is funded by the Engineering \& Physical Sciences Research Council (EPSRC) under grant EP/X011488/1.

J. Zhang was supported by  National key R\&D program of China: 2022YFA1005700, National Natural Science Foundation of China(12171031) and Beijing Natural Science Foundation(1242011).

J. Zheng was supported by National key R\&D program of China: 2021YFA1002500 and NSF grant of China (No. 12271051).

\begin{center}
  
\end{center}

\end{document}